\title{Hermitian Preconditioning for a class of Non-Hermitian Linear Systems}
\author{Nicole Spillane \thanks{CNRS, CMAP, Ecole Polytechnique, Institut Polytechnique de Paris, 91128 Palaiseau Cedex, France (\textit{nicole.spillane@cmap.polytechnique.fr})}}
\newcommand{\rhs}{\ensuremath{\mathbf{b}}}
\newcommand{\matid}{\ensuremath{\mathbf{{I}}}}
\newcommand{\bu}{\ensuremath{\mathbf{u}}}
\newcommand{\by}{\ensuremath{\mathbf{y}}}
\newcommand{\bR}{\ensuremath{\mathbf{R}}}
\newcommand{\bP}{\ensuremath{\mathbf{P}}}
\newcommand{\bB}{\ensuremath{\mathbf{B}}}
\newcommand{\bM}{\ensuremath{\mathbf{M}}}
\newcommand{\bN}{\ensuremath{\mathbf{N}}}
\newcommand{\bW}{\ensuremath{\mathbf{W}}}
\newcommand{\bPi}{\ensuremath{\boldsymbol{\Pi}}}
\newcommand{\bx}{\ensuremath{\mathbf{x}}}
\newcommand{\bb}{\ensuremath{\mathbf{b}}}
\newcommand{\bK}{\ensuremath{\mathbf{K}}}
\newcommand{\bA}{\ensuremath{\mathbf{A}}}
\newcommand{\bH}{\ensuremath{\mathbf{H}}}
\newcommand{\bS}{\ensuremath{\mathbf{S}}}
\newcommand{\br}{\ensuremath{\mathbf{r}}}
\newcommand{\bz}{\ensuremath{\mathbf{z}}}
\newcommand{\bw}{\ensuremath{\mathbf{w}}}
\newcommand{\bq}{\ensuremath{\mathbf{q}}}
\newcommand{\bp}{\ensuremath{\mathbf{p}}}
\newcommand{\0}{\ensuremath{^{0}}}
\newcommand{\s}{\ensuremath{^{s}}}
\newcommand{\conj}{\ensuremath{^{*}}}
\theoremstyle{plain}
\newtheorem{theorem}{Theorem}
\newtheorem{corollary}{Corollary}
\newtheorem{lemma}{Lemma}
\newtheorem{remark}{Remark} 
\begin{document}

\maketitle

\pagestyle{myheadings}
\thispagestyle{plain}
\markboth{Nicole SPILLANE}{Hermitian preconditioning for non-Hermitian systems}

\textbf{Keywords: } GMRES, preconditioning, convergence, Krylov subspace method, GCR, Minimal residual iteration

\textbf{MSCodes: }  65F10, 65Y05, 68W40

\abstract{
This work considers the convergence of GMRES for non-singular problems. GMRES is interpreted as the GCR method which allows for simple proofs of the convergence estimates. Preconditioning and weighted norms within GMRES are considered. The objective is to provide a way of choosing the preconditioner and GMRES norm that ensure fast convergence. The main focus of the article is on Hermitian preconditioning (even for non-Hermitian problems). It is proposed to choose a Hermitian preconditioner $\bH$ and to apply GMRES in the inner product induced by $\bH$. If moreover, the problem matrix $\bA$ is positive definite, then a new convergence bound is proved that depends only on how well $\bH$ preconditions the Hermitian part of $\bA$, and on how non-Hermitian $\bA$ is. In particular, if a scalable preconditioner is known for the Hermitian part of $\bA$, then the proposed method is also scalable. This result is illustrated numerically. 
}

{\footnotesize
\tableofcontents
}
\section{Introduction}

GMRES, or the Generalized Minimal Residual Method, is a method of choice for solving general linear systems. First introduced by \cite{zbMATH03967793}, the convergence of GMRES has since been extensively studied \cite{zbMATH03831185,elman1982iterative,zbMATH06385506,zbMATH05029264,zbMATH06394941,zbMATH00089377}. In this manuscript, linear systems 
\begin{equation}
 \bA {\bx} = \bb,
\end{equation}
 are considered. Initially, the only assumption is that $\bA$ is a general non-singular matrix over the field $\mathbb K$ with $\mathbb K = \mathbb R \text{ or } \mathbb C$. The focus then shifts to Hermitian preconditioning and matrices whose Hermitian part is positive definite. In each of these cases, convergence is examined for the weighted GMRES algorithm, a version of GMRES where the Euclidean inner product has been replaced by $(\bx, \by) \mapsto \langle \bW \bx, \by \rangle$ with $\bW$ Hermitian positive definite. 
The objective of the present work is to prove a convergence bound that can then be used to choose the preconditioner $\bH$ and the weight matrix $\bW$ in a smart way.

Convergence of GMRES in the Euclidean inner product when $\bA$ is positive definite is studied in \cite{zbMATH03831185}. Interestingly, \cite{zbMATH03831185} precedes the introduction of GMRES in  \cite{zbMATH03967793} because the results are for the Generalized Conjugate Residual algorithm, or GCR (that produces the same iterates as GMRES).

In his PhD thesis \cite{cai1989some}, Cai proposes to select for GMRES applied to a matrix $\bP$, an inner product that `is chosen to take advantage of some special properties of $\bP$'. In collaboration with Widlund \cite{zbMATH00036024}, they propose domain decomposition preconditioners for non-symmetric and indefinite second order PDEs. GMRES is considered in the norm induced by the highest order term in the variational form: the energy norm. More will be said about these results below. 
{Another choice that has been proposed in \cite{zbMATH01096035} is to apply GMRES in the inner product induced by the inverse of a symmetric positive definite (spd) preconditioner. Later, a different set of authors justify this choice in \cite{zbMATH01201042} by the argument that, if $\bH$ is spd and $\bA$ is nearly symmetric then $\bH \bA$ is nearly $\bH^{-1}$-self adjoint. In \cite{zbMATH01271905}, preconditioning of saddle point problems is tackled and the bounds from \cite{zbMATH01096035} are applied in an inner product that is derived from a non-symmetric triangular preconditioner.  The authors of \cite{zbMATH06290704} address the problem of finding inner products that make a general preconditioned system nearly normal or nearly non-symmetric. A detailed presentation of what is now called weighted GMRES, with results both on convergence and implementation strategy, can be found in \cite{zbMATH05626642}. In \cite{zbMATH01268410}, the author reinvents the idea of changing the norm within GMRES in a general framework and coins the term \textit{weighted GMRES}. The inner products considered by \cite{zbMATH01268410} are diagonal matrices of weights that changes at each restart of the method. Larger weights are associated to the larger components of the residual at the end of the previous cycle leading to a faster convergence. This idea is compared to other accelerators for GMRES in \cite{zbMATH06376430}.    
}

The method that is proposed in Section~\ref{sec:PD} assumes that an efficient preconditioner $\bH$ for the Hermitian part of $\bA$ (\textit{i.e.,} $1/2 (\bA + \bA^*)$) is known. The same $\bH$ is applied within GMRES to solve the non-Hermitian problem (for $\bA$). As an illustration, the convection-diffusion-reaction discretized by finite elements is solved at the end of this article. The preconditioner $\bH$ is chosen to be a two-level domain decomposition preconditioner \cite{zbMATH02113718} with the GenEO coarse space introduced by \cite{2011SpillaneCR,spillane2013abstract} (see also \cite{SPILLANE:2013:FETI_GenEO_IJNME,haferssas2017additive,klawonn2016adaptive,calvo2016adaptive,pechstein2017unified,arxiv.2104.00280} for a non exhaustive list of extensions and related work).  
It has been shown, \textit{e.g.}, in \cite{6877513}, that GenEO is a very powerful and scalable solver for spd problems.

The idea of separating the Hermitian and skew-Hermitian part of a matrix in order to approximate the solution of a linear system has been exploited in the Hermitian and skew-{Hermitian} splitting methods introduced in \cite{zbMATH02027915}.  The field of domain decomposition for non-spd problems was paved by \cite{cai1989some,zbMATH00036024} (see also \cite{zbMATH02113718}[Chapter 11]). The authors solve convection-diffusion-reaction with a two-level additive Schwarz preconditioner where the coarse space is {based on a} coarse grid. The one level component in the preconditioner takes two forms: either the local solvers come from the original (non-symmetric and/or indefinite) matrix, or the local solvers come from an spd part of the problem matrix. GMRES is applied in the norm induced by that spd part. In both cases the coarse projector is for the original operator. If the coarse mesh is fine enough, the rate of convergence is shown to be independent of the number of degrees of freedom and the number of local problems (scalability). A simplified explanation for the presence of a condition on the size of the subdomains is that, for the theory to go through, the second order term must dominate the other terms. It is known that the first non-zero eigenvalue of $-\Delta$ on regular subdomains of diameter $H$ is of the order of $1/H$.  The algorithm is generalized in \cite{zbMATH00149253} to any matrix that can be viewed as a perturbation of an spd matrix. Their proposed preconditioner is the combination of a very good preconditioner for the spd part and a coarse solve. A parameter $\delta_0$ qualifies how efficient the coarse space is at filling the gap between the original problem and the spd one. It enters into the convergence estimate. The algorithm is later called  CSPD for Coarse Grid Plus SPD Preconditioning in a numerical comparison with other algorithms \cite{cai1992comparison}. 

More recently, domain decomposition for Helmholtz has been studied \cite{zbMATH06713479,zbMATH07248609}. This case is symmetric indefinite and applying GMRES in the energy norm is a crucial part of the proof. Weighted GMRES was also applied to study the spd GenEO eigenproblem applied to indefinite and non-self-adjoint problems in \cite{arxiv.2103.16703,zbMATH07726053}. It is proved and observed numerically that GenEO performs well also on a family of non-spd second-order problems. Finally, the authors of \cite{zbMATH07395831} prove an abstract framework for one-level additive Schwarz for non-Hermitian or indefinite problems. They illustrate their results by solving the convection-diffusion-reaction equation. The present work takes a more algebraic route and proves results that are not restricted to domain decomposition.  

The outline for the rest of the article is as follows. In Section~\ref{sec:GMRES}, some notation is introduced and Theorem~\ref{th:general} gives an overview of some of the convergence results { and their connection to existing results}. In Section~\ref{sec:Orthomin}, the GMRES algorithm is studied through the study of an equivalent form that is the GCR algorithm. A convergence bound is proved in Theorem~\ref{th:RPconv} that is connected to field of value, or Elman, estimate \cite{zbMATH03831185,elman1982iterative,zbMATH05029264}. In Section~\ref{sec:SymPrec}, Theorem~\ref{th:HPconv}, a special case is considered where the preconditioner $\bH$ is Hermitian and GMRES/GCR is applied in the $\bH$ inner product (or $\bH^{-1}$ for left preconditioned GMRES). 
For positive-definite $\bA$, a final convergence estimate is proved in Theorem~\ref{th:final} that makes explicit the rate at which the non-Hermitianness of $\bA$ slows down convergence. As an illustration of this result, in Section~\ref{sec:Numerical},  a solver is proposed for the convection-diffusion-reaction equation. The preconditioner is a domain decomposition preconditioner with a GenEO spectral coarse space. This way, the Hermitian part of the problem is very well preconditioned. It is shown theoretically that the overall convergence does not depend on the number of subdomains (scalability), or on the discretization step. Numerical experiments show that Hermitian preconditioning can be very efficient and scalable for mildly non-Hermitian problems.  

\section{Problem posed, notation and main results}
\label{sec:GMRES}

Let $\bK = \mathbb R$ or $\mathbb C$ be the field over which the linear system is considered. Let $\bA \in \mathbb K^{n\times n}$ be a non-singular matrix. Given any $\bb \in \mathbb K^n$, the problem at hand is to find ${\bx} \in \mathbb K^n$ such that:
\[
\bA {\bx} = \bb.
\]
The chosen methodology is to apply weighted and preconditioned (WP-) GMRES. Let $\bH \in \mathbb K^{n\times n}$ denote the preconditioner and $\bW \in \mathbb K^{n\times n}$ denote the weight matrix. It is assumed that $\bH$ is non-singular and that $\bW$ is Hermitian positive definite (hpd). The inner product and norm induced by $\bW$ are denoted by $\langle \cdot, \cdot \rangle_\bW$ and $\|\cdot \|_\bW$, respectively:
\[
\langle \bx, \by \rangle_\bW = \langle \bW \bx, \by \rangle =  \langle \bx,\bW \by \rangle = \by\conj \bW \bx \text{ and } {\|\bx\|_\bw = {\langle \bx, \bx \rangle_\bW}^{1/2}},\, \forall \, \bx,\,\by \in \mathbb K^n . 
\]

Any matrix $\bB \in \mathbb K^{n \times n}$  can be split into the sum of its Hermitian part and its skew-Hermitian part. The notation used is  
\begin{equation}
\label{eq:hsh}
\bM(\bB) = \frac{\bB +\bB^*}{2} \text{ (Hermitian part) and } \bN(\bB) = \frac{\bB - \bB^*}{2} \text{(skew-Hermitian part)}.
\end{equation}

An overview of some results in the article is given in the following theorem.

\begin{theorem}[Summary of main results]
\label{th:general}
Assume that the operator $\bA \in \mathbb K^{n\times n}$ and preconditioner $\bH \in \mathbb K^{n\times n}$ are non-singular, and that the weight matrix $\bW \in \mathbb K^{n\times n}$ is hpd. The $i$-th iterate of {weighted and preconditioned GMRES (WP-GMRES)} with right preconditioning satisfies
\begin{align*}
\label{eq:allconv}
\frac{ \|\br_{i} \|_\bW}{\|\br_{0} \|_\bW} & \leq \left[ 1 - \inf\limits_{\by\neq 0}    \frac{|\langle {\bA \bH \by}, \by \rangle_\bW|^2}{ \|\bA \bH \by \|_\bW^2  \|\by\|_\bW^2}\right]^{i/2}\\ 
& \leq 
 \left[ 1- \inf\limits_{\by\neq 0} \frac{|\langle \bM( \bA^{-1}) \by, \by \rangle|}{ \langle \bH \by, \by \rangle} \times \inf\limits_{\by\neq 0} \frac{|\langle \bM(\bA) \by, \by \rangle|}{ \langle \bH^{-1} \by, \by \rangle}\right]^{i/2} \text{ if $\bH = \bW$ is hpd}\\
&   \leq \left[ 1 -  \frac{1/\kappa(\bH \bM(\bA))}{1 + \rho(\bM(\bA)^{-1} \bN(\bA) )^2 } \right]^{i/2} \text{if $\bH = \bW$ is hpd and $\bA$ positive definite},
\end{align*}
where $\kappa(\bH \bM(\bA))$ is the condition number of $\bH \bM(\bA)$ and $\rho(\cdot)$ denotes the spectral radius of a matrix. For the last two estimates, it has been assumed that the preconditioner $\bH$ is hpd and that $\bW = \bH$. In this case, the algorithm will be referred to as WHP-GMRES for Weighted with a Hermitian Preconditioner. 
\end{theorem}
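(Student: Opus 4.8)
The three bounds are three successively more explicit estimates, and the plan is to prove them as a descending chain, each line dominating the one above it. The first line is the field-of-values (Elman) bound for the right-preconditioned operator $\bA\bH$ in the $\bW$-inner product, and I would obtain it exactly as in Theorem~\ref{th:RPconv} from the GCR reading of GMRES: at step $i$ the iterate minimizes $\|\br\|_\bW$ over the current Krylov space, so in particular $\alpha\br_i$ is an admissible correction and $\|\br_{i+1}\|_\bW^2 \le \min_{\alpha}\|\br_i - \alpha\,\bA\bH\br_i\|_\bW^2$. Expanding and optimizing the scalar $\alpha$ gives the one-step reduction $\|\br_{i+1}\|_\bW^2 \le \|\br_i\|_\bW^2\bigl(1 - |\langle \bA\bH\br_i,\br_i\rangle_\bW|^2/(\|\bA\bH\br_i\|_\bW^2\|\br_i\|_\bW^2)\bigr)$; replacing the quotient by its infimum over all nonzero $\by$ and iterating over $i$ yields the first line.

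For the second line I specialize to $\bW=\bH$ hpd and rewrite the field-of-values quotient after the substitution $\bz=\bH\by$. Using that $\bH$ is Hermitian one checks $\langle\bA\bH\by,\by\rangle_\bH=\langle\bA\bz,\bz\rangle$, $\|\by\|_\bH^2=\langle\bH^{-1}\bz,\bz\rangle$ and $\|\bA\bH\by\|_\bH^2=\langle\bH\bA\bz,\bA\bz\rangle$, so the infimum becomes $\inf_\bz|\langle\bA\bz,\bz\rangle|^2/(\langle\bH\bA\bz,\bA\bz\rangle\langle\bH^{-1}\bz,\bz\rangle)$. I would then split the squared numerator into two factors and treat them independently: in the first factor I bound $|\langle\bA\bz,\bz\rangle|\ge|\langle\bM(\bA)\bz,\bz\rangle|$ (the modulus dominates the real part, which is the Hermitian-part form); in the second factor I substitute $\bw=\bA\bz$, so that $|\langle\bA\bz,\bz\rangle|=|\langle\bA^{-1}\bw,\bw\rangle|\ge|\langle\bM(\bA^{-1})\bw,\bw\rangle|$ while the denominator becomes $\langle\bH\bw,\bw\rangle$. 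Since the infimum of a product is at least the product of the infima, this produces exactly the two quotients on the second line.

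For the third line I assume in addition that $\bA$ is positive definite, i.e. $\bM:=\bM(\bA)$ is hpd, and I must show that the product of the two infima is bounded below by $(1/\kappa(\bH\bM))/(1+\rho(\bM^{-1}\bN)^2)$, with $\bN:=\bN(\bA)$. The second infimum is immediately $\lambda_{\min}(\bH\bM)$. For the first infimum, the identity $\bM(\bA^{-1})=\bA^{-1}\bM\bA^{-*}$ (from factoring $(\bM+\bN)^{-1}+(\bM-\bN)^{-1}=(\bM+\bN)^{-1}(2\bM)(\bM-\bN)^{-1}$) lets me write it, after the change of variable $\bu=\bA^{-*}\by$, as $\inf_\bu\langle\bM\bu,\bu\rangle/\|\bA^*\bu\|_\bH^2$. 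The core estimate is an upper bound on $\|\bA^*\bu\|_\bH^2$, built from two facts: first $\langle\bH\bv,\bv\rangle\le\lambda_{\max}(\bH\bM)\langle\bM^{-1}\bv,\bv\rangle$ (because $\bM\bH$ and $\bH\bM$ are similar), which trades the $\bH$-norm for the $\bM^{-1}$-norm; and second the Pythagorean identity $\|\bA^*\bu\|_{\bM^{-1}}^2=\langle\bM\bu,\bu\rangle+\|\bN\bu\|_{\bM^{-1}}^2$, where the cross term vanishes precisely because $\bN$ is skew-Hermitian. Bounding $\|\bN\bu\|_{\bM^{-1}}^2\le\rho(\bM^{-1}\bN)^2\langle\bM\bu,\bu\rangle$ (using that $\bM^{-1/2}\bN\bM^{-1/2}$ is skew-Hermitian, so its spectral norm equals its spectral radius, which equals $\rho(\bM^{-1}\bN)$) then gives $\|\bA^*\bu\|_\bH^2\le\lambda_{\max}(\bH\bM)(1+\rho(\bM^{-1}\bN)^2)\langle\bM\bu,\bu\rangle$, and multiplying the two infimum estimates closes the bound.

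The main obstacle I expect is precisely this last estimate: recognizing the two ``decoupling'' facts that make it clean, namely the cancellation of the cross term in $\|\bA^*\bu\|_{\bM^{-1}}^2$ (which converts the naive triangle-inequality loss into an exact Pythagorean split) and the identification of the skew-Hermitian operator norm with $\rho(\bM^{-1}\bN)$. The rest is bookkeeping with Hermitian and skew-Hermitian parts and generalized Rayleigh quotients. A secondary point to watch is keeping the asymmetric roles of $\bH$ and $\bH^{-1}$ straight throughout, and checking that each change of variable ($\bz=\bH\by$, $\bw=\bA\bz$, $\bu=\bA^{-*}\by$) is a bijection, so that passing to infima over the new variable is legitimate.
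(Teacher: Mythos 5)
Your proposal is correct. For the first two estimates you follow essentially the paper's own route: the one-step minimal-residual argument (the paper phrases it through GCR in Theorem~\ref{th:RPconv}, where the inequality $\|\bA\bz_i\|_\bW\ge\|\bq_i\|_\bW$ plays exactly the role of your un-orthogonalized trial direction $\br_i-\alpha\bA\bH\br_i$), and then the same chain of substitutions $\by\mapsto\bH\by$, splitting of the squared numerator, $\by\mapsto\bA\by$, and the bound $|\langle\bB\by,\by\rangle|\ge|\langle\bM(\bB)\by,\by\rangle|$ as in Theorem~\ref{th:HPconv}. Where you genuinely diverge is the third estimate. The paper inserts $\langle\bM(\bA)^{-1}\by,\by\rangle$ as a pivot, identifies $\inf_{\by\neq 0}\langle\bM(\bA)^{-1}\by,\by\rangle/\langle\bH\by,\by\rangle=1/\lambda_{\max}(\bH\bM(\bA))$ and $\inf_{\by\neq 0}\langle\bM(\bA)\by,\by\rangle/\langle\bH^{-1}\by,\by\rangle=\lambda_{\min}(\bH\bM(\bA))$ by Rayleigh quotients, and then disposes of the remaining factor $\inf_{\by\neq 0}\langle\bM(\bA^{-1})\by,\by\rangle/\langle\bM(\bA)^{-1}\by,\by\rangle=[1+\rho(\bM(\bA)^{-1}\bN(\bA))^2]^{-1}$ by citing a lemma of Johnson, stated as \eqref{eq:thanks_johnson}. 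You instead prove this key identity from scratch: the factorization $\bM(\bA^{-1})=\bA^{-1}\bM(\bA)\bA^{-*}$, the change of variables $\bu=\bA^{-*}\by$, the exact cancellation of the cross term giving $\|\bA^*\bu\|_{\bM(\bA)^{-1}}^2=\langle\bM(\bA)\bu,\bu\rangle+\|\bN(\bA)\bu\|_{\bM(\bA)^{-1}}^2$, and the normality of $\bM(\bA)^{-1/2}\bN(\bA)\bM(\bA)^{-1/2}$, which identifies its operator norm with $\rho(\bM(\bA)^{-1}\bN(\bA))$. I checked these steps and they are sound; your argument even recovers the equality in \eqref{eq:thanks_johnson} rather than just a lower bound. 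Your route buys a self-contained derivation of the non-Hermitianness factor $(1+\rho^2)^{-1}$ (effectively reproving the relevant direction of the cited Corollary 3 of Johnson); the paper's route buys brevity by outsourcing it to the literature.
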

\begin{proof}
{The proof proceeds as follows. First, in Theorem~\ref{th:WPGCRintro}, the equivalence between weighted and preconditioned GCR, or WP-GCR, (Algorithm~\ref{alg:Worthoright}) and WP-GMRES is established as long as the origin is not in the field of values of the preconditioned operator. In the opposite case, the bounds above simplify to $\|\br_{i} \|_\bW \leq \|\br_{0} \|_\bW$, a trivial result. The first estimate in the theorem is proved for WP-GCR in Theorem~\ref{th:RPconv}. The second estimate in the theorem is proved for WHP-GCR (where again HP stands for Hermitian preconditioning) in Theorem~\ref{th:HPconv}. It is a direct consequence of the first when $\bH$ is spd and $\bW = \bH$. The last estimate in the theorem is proved in Corollary~\ref{cor:final} .}   
\end{proof}
{
\paragraph{Connection to previous results} Field of value estimates have previously been considered. 
The first result in the theorem generalizes \cite{zbMATH01226274}[Theorem 6.1] (and even more precisely the second last line in the proof). It was also noticed there, in \cite{zbMATH01226274}[Theorem 6.2], that the case $\bW = \bH$ spd is of particular interest and simplifies to the second estimate in the theorem (see also the earlier work \cite{zbMATH01096035}[Theorem 3.2]). These pioneering contributions consider problems arising from discretizations of elliptic and bounded variational problems. The present manuscript considers the problem directly in its algebraic form. The first two estimates can be informative even if the matrix does not arise from an elliptic PDE or even if it is not positive definite. 
There is also a connection between WHP-GMRES and the algorithm in Section 10.4 of \cite{elman1982iterative} where split preconditioning by $\bS^\top$ on the left and $\bS$ on the right is applied. The scope of split preconditioning is very restrictive but it could be proved that it is equivalent to WHP-GMRES with $\bH = \bS \bS^\top$. This would allow to extend the bound in Section 10.4 of \cite{elman1982iterative} to WHP-GMRES. Even then, the result would still be less sharp than the third bound in Theorem~\ref{th:general} because the condition number is squared.  
Finally, the present work generalizes the bounds to the complex case, where it no longer holds that $\langle \bA \by, \by \rangle = \langle \bM(\bA) \by, \by \rangle$. 
}

\section{Convergence of WP-GMRES viewed as WP-GCR} 
\label{sec:Orthomin}

GCR \cite{zbMATH03831185}, also known as Orthomin \cite{vinsome1976orthomin} is equivalent to GMRES in the sense that it generates the same approximate solutions at each iteration. GMRES is usually preferred as it is slightly less computationally expensive and more stable. However, GCR has the advantage of a simpler presentation and the proofs in this article all come from the GCR formulation of GMRES. 

\subsection{WP-GCR with right preconditioning}
\label{subs:WPGCRright}
Weighted and preconditioned GCR (WP-GCR) with right preconditioning is presented in Algorithm~\ref{alg:Worthoright}. The initial guess $\bx_0$ is assumed to be any vector in $\mathbb K^n$.  

\begin{algorithm}
\caption{WP-GCR with right preconditioning}
\label{alg:Worthoright}
\begin{algorithmic}
\REQUIRE $\bx_0 \in \mathbb R^n$
\STATE $\br_0= \bb - \bA \bx_{0}$ 
\STATE $\bz_0 = \bH \br_0$ 
\STATE $\bp_0 = \bz_0$
\STATE $\bq_0 = \bA \bp_{0}$
\FOR{$i = 0,\,1,\, \dots,\;$convergence}
\STATE  $\delta_i = \langle \bq_i, \bq_{i}\rangle_\bW $; \quad $\gamma_i = \langle {\bq_{i}}, \br_{i} \rangle_\bW$; \quad $\alpha_i =  \gamma_i/\delta_i $
\STATE  $\bx_{i+1} = \bx_{i}+ \alpha_i \bp_i$ 
\STATE  $\br_{i+1} = \br_i- \alpha_i \bq_i$ 
\STATE  ${\bz_{i+1}} =  \bH \br_{i+1}$
  \FOR{$j = 0,\,1,\, \dots,i$}
\STATE    $\Phi_{i,j} = \langle \bq_j, \bA \bz_{i+1} \rangle_\bW$; \quad  $\beta_{i,j} = \Phi_{i,j}/ \delta_j $
\ENDFOR
\STATE  $\bp_{i+1} = \bz_{i+1}   - \sum\limits_{j=0}^{i}\beta_{i,j} \bp_j  $\;
\STATE  $\bq_{i+1} = \bA \bz_{i+1}  - \sum\limits_{j=0}^{i} \beta_{i,j} \bq_j $\;
\ENDFOR
\RETURN Return {$\bx_{i+1}$}
\end{algorithmic}
\end{algorithm}

In the following theorem, it is proved that WP-GCR is indeed equivalent to WP-GMRES unless WP-GCR has an unlucky breakdown. This can only happen if $0$ is in the $\bW$-field of values of $\bA\bH$, a set defined by
\begin{equation}
\label{eq:defFOV}
 W_\bW(\bA\bH) = \left\{ \frac{\langle {\bA \bH \bu}, \bu \rangle_\bW}{ \langle \bu, \bu \rangle_\bW} ; \bu \in \mathbb C^n \setminus \{ 0 \}\right\}.
\end{equation}
Although the proof is not new it has been included. Indeed, some intermediary results in the proof are used in subsequent proofs.

{
\begin{theorem}
\label{th:WPGCRintro}
Assume that the operator $\bA \in \mathbb K^{n\times n}$ and preconditioner $\bH \in \mathbb K^{n\times n}$ are non-singular, and that the weight matrix $\bW \in \mathbb K^{n\times n}$ is hpd. The $i$-th residual of Algorithm~\ref{alg:Worthoright} satisfies  
\begin{equation}
\label{eq:RPmin}
\| \br_i \|_\bW = \operatorname{min}\left\{ \| \mathbf b - \bA \bx \|_{ \bW } ; \, {\bx\in\bx_{0} + \operatorname{span} \{\bp_0, \dots, \bp_{i-1}\}}\right\} . 
\end{equation}
Moreover, if $0 \not \in  W_\bW(\bA\bH)$, then  $ \operatorname{span} \{\bp_0, \dots, \bp_{i-1}\} = \mathcal K_i$ where 
\[ 
\mathcal K_i := \{ \bH \br_0, \bH \bA \bH \br_0, \dots,  (\bH \bA)^{i-1} \bH \br_0\}
\]
is the Krylov subspace, \textit{i.e.},  WP-GCR returns the same approximate solutions as WP-GMRES. 
\end{theorem}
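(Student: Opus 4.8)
The plan is to read the algebraic structure off the recurrences through a short sequence of inductions, then to close the argument with a least-squares characterization for the minimization property and with the field-of-values hypothesis for the Krylov identity. The three facts I would establish first, each by induction on $i$ and assuming no breakdown so far (i.e. $\br_0, \dots, \br_{i-1} \neq 0$), are: (i) $\bq_i = \bA \bp_i$ for every $i$, which holds at $i = 0$ and propagates because $\bq_{i+1} = \bA \bz_{i+1} - \sum_j \beta_{i,j} \bq_j = \bA(\bz_{i+1} - \sum_j \beta_{i,j}\bp_j) = \bA \bp_{i+1}$; (ii) the vectors $\bq_0, \dots, \bq_i$ are pairwise $\bW$-orthogonal, since the coefficient $\beta_{i,j} = \langle \bq_j, \bA \bz_{i+1}\rangle_\bW / \delta_j$ is precisely the Gram--Schmidt coefficient that enforces $\langle \bq_k, \bq_{i+1}\rangle_\bW = 0$ for $k \leq i$; and (iii) $\langle \bq_k, \br_i\rangle_\bW = 0$ for all $k < i$, because $\alpha_i = \gamma_i/\delta_i$ annihilates the $\bq_i$-component in $\br_{i+1} = \br_i - \alpha_i \bq_i$ while the other components vanish by (ii) and the induction hypothesis. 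Throughout, the denominators $\delta_j = \|\bq_j\|_\bW^2$ must be nonzero; I return to this below.

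For the minimization identity \eqref{eq:RPmin}, I would use (i) to rewrite the feasible set: $\bx \in \bx_0 + \spann\{\bp_0, \dots, \bp_{i-1}\}$ if and only if $\bb - \bA\bx \in \br_0 + \spann\{\bq_0, \dots, \bq_{i-1}\}$. Minimizing $\|\bb - \bA\bx\|_\bW$ thus amounts to finding the best $\bW$-approximation of $\br_0$ from $\spann\{\bq_0, \dots, \bq_{i-1}\}$, whose unique solution is characterized by having its residual $\bW$-orthogonal to that subspace. Since the recurrence for $\br$ places $\br_i$ in $\br_0 + \spann\{\bq_0, \dots, \bq_{i-1}\}$ and (iii) makes it $\bW$-orthogonal to that subspace, $\br_i$ is exactly the minimizer, which is \eqref{eq:RPmin}.

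For the Krylov identity, write $\mathcal P_i = \spann\{\bp_0, \dots, \bp_{i-1}\}$. I would first prove $\mathcal P_i \subseteq \mathcal K_i$ by induction: the recurrence $\br_k = \br_0 - \bA \sum_{\ell < k} \alpha_\ell \bp_\ell$ together with $\mathcal P_k \subseteq \mathcal K_k$ gives $\br_k \in \br_0 + \bA \mathcal K_k$, hence $\bz_k = \bH \br_k \in \mathcal K_{k+1}$ (since $\bH \bA$ carries each generator of $\mathcal K_k$ into $\mathcal K_{k+1}$), and then $\bp_k = \bz_k - \sum_{j < k} \beta_{k-1,j}\bp_j \in \mathcal K_{k+1}$. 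The reverse inclusion I would get from a dimension count, and this is exactly where the hypothesis $0 \notin W_\bW(\bA \bH)$ is used. Combining (i) and (iii), one computes $\langle \bq_i, \br_i\rangle_\bW = \langle \bA \bz_i, \br_i\rangle_\bW = \langle \bA \bH \br_i, \br_i\rangle_\bW$, which by the field-of-values assumption is nonzero as soon as $\br_i \neq 0$; hence $\bq_i \neq 0$, there is no breakdown, and every $\delta_j$ above is genuinely nonzero. Being nonzero and $\bW$-orthogonal, $\bq_0, \dots, \bq_{i-1}$ are linearly independent, and so are $\bp_0, \dots, \bp_{i-1}$ because $\bA$ is injective; thus $\dim \mathcal P_i = i$. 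Since $\dim \mathcal K_i \leq i$ and $\mathcal P_i \subseteq \mathcal K_i$, this forces $\mathcal P_i = \mathcal K_i$, and the equivalence with WP-GMRES is immediate because both methods minimize $\|\bb - \bA\bx\|_\bW$ over the same affine space $\bx_0 + \mathcal K_i$.

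The step I expect to be the crux is the breakdown analysis, because the inductions of the first paragraph tacitly assume the denominators $\delta_j$ are nonzero, while the clean justification of this --- the identity $\langle \bq_i, \br_i\rangle_\bW = \langle \bA \bH \br_i, \br_i\rangle_\bW$ combined with $0 \notin W_\bW(\bA\bH)$ --- is the very same computation that powers the dimension count. I would therefore thread non-breakdown through the inductive construction, treating $\br_i = 0$ (convergence) as the terminating case and establishing $\bq_i \neq 0$ at each surviving step before using $\delta_i$ as a denominator.
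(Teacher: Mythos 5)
Your proposal is correct and follows essentially the same route as the paper: the $\bW$-orthogonality of the $\bq_j = \bA\bp_j$, the identification of $\br_0 - \br_i$ as the $\bW$-orthogonal projection of $\br_0$ onto $\operatorname{span}\{\bq_j;\, j<i\}$, and the use of $\langle \bA\bH\br_{i-1}, \br_{i-1}\rangle_\bW \neq 0$ (from $0 \notin W_\bW(\bA\bH)$) to rule out unlucky breakdown and obtain $\operatorname{span}\{\bp_j\} = \mathcal K_i$. Your dimension-count phrasing of the last step and your explicit threading of the non-breakdown condition through the inductions are slightly more careful than the paper's ``the reverse inclusion is true unless\ldots'' argument, but the underlying idea is identical.
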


\begin{proof}
The vectors $\bq_i =  \bA\bp_i$ are pairwise orthogonal, \textit{i.e.},
\begin{equation}
\label{eq:RPorth}
\langle \bq_j , \bq_i \rangle_\bW = 0 \text{ if } i \neq j.
\end{equation}
Indeed, by symmetry, it suffices to prove by recursion over $i \geq 0$ that: $\langle \bA  \bp_i,\bA \bp_j \rangle_\bW = 0$ for every $j < i$.
This is easy by recalling the definitions of $\bp_i$ and $\beta_{i,j}$. 

Next, the minimization property~\eqref{eq:RPmin} is proved. By an immediate recursion, it holds that $\br_i = \br_0 -  \sum\limits_{j=0}^{i-1} \gamma_j/\delta_j \bA \bp_j$. We notice that 
$\gamma_i = \langle \bq_i, \br_i \rangle_\bW =  \langle \bq_i, \br_0 \rangle_\bW -  \sum\limits_{j=0}^{i-1} \gamma_j/\delta_j \langle \bq_i , \bA \bp_j \rangle = \langle \bq_i, \br_0 \rangle_\bW$  
by~\eqref{eq:RPorth}. The $i$-th residual can now be rewritten as
\[
\br_0 - \br_i =  \sum\limits_{j=0}^{i-1} \frac{\langle \bA \bp_j, \br_0 \rangle_\bW}{\langle \bA \bp_j, \bA \bp_j \rangle_\bW} \bA \bp_j.
\] 
This means that $\br_0 - \br_i$ is the $\bW$-orthogonal projection of $\br_0$ onto $\operatorname{span}\{\bq_j; \, j <i \}$.  Two other equivalent characterizations of the orthogonal projection are  
\begin{equation}
\label{eq:RPprop}
\br_i\in\br_{0}  +  \operatorname{span}\{\bq_j; \, j <i \} \text{ with }  \br_i = (\br_0 - \br_i) - \br_0 \perp^\bW \operatorname{span}\{\bq_j; \, j <i \},
\end{equation}
and $\| \br_i \|_{\bW} = \operatorname{min}\left\{ \| \br \|_{\bW} ; \, {\br\in\br_{0}  + \operatorname{span}\{\bq_j; \, j <i \}} \right\}$. Then \eqref{eq:RPmin} follows by the change of variables $\br = \mathbf b - \bA\bx$ and $\br_0 =  \bb -  \bA \bx_0$.  

It remains to justify that $\operatorname{span}\{\bp_j; \, j <i \} = \mathcal K_i$. It is obvious that $\operatorname{span}\{\bp_j; \, j <i \} \subset  \mathcal K_i$. The reverse inclusion is true unless $\bA \bz_{i-1} \in \operatorname{span}\{\bq_j; \, j <i-1 \}$. Then, by \eqref{eq:RPprop}, $\br_{i-1} \perp^{\bW} \bA \bz_{i-1}$, \textit{i.e.,} $\langle \br_{i-1}, \bA \bH  \br_{i-1}  \rangle_\bW = 0 = \gamma_{i-1}$. If $0 \not \in  W_\bW(\bA\bH)$, this implies $\br_{i-1} = \mathbf 0$ which is a lucky breakdown of both GCR and GMRES. 
The proof ends by recalling the characterization of the $i$-th iterate of preconditioned GMRES in \textit{e.g.}, \cite{zbMATH01953444}[Section 9]. Following the idea in \cite{cai1989some,zbMATH01268410}, the Euclidean product can be changed to the $\bW$-inner product in order to get the result for WP-GMRES. 
\end{proof}

}
The following observations can be made:
\begin{itemize}
\item Preconditioning modifies the Krylov subspace. 
\item Weighting modifies which norm of the residual is minimized.
\item Weighting does not modify the Krylov subspace. 
\end{itemize}

The speed of convergence of the algorithm is addressed next by comparing two subsequent residuals. 
\begin{theorem}[Convergence of WP-GCR]
\label{th:RPconv}
Assume that the operator $\bA \in \mathbb K^{n\times n}$ and preconditioner $\bH \in \mathbb K^{n\times n}$ are non-singular, and that the weight matrix $\bW \in \mathbb K^{n\times n}$ is hpd. The $i$-th iterate of Algorithm~\ref{alg:Worthoright} satisfies
\begin{equation}
\label{eq:RPsuff}
\frac{ \|\br_{i} \|_\bW}{\|\br_{0} \|_\bW} \leq \left( 1 -  \inf\limits_{\by \neq \mathbf 0}  \frac{|\langle {\bA \bH \by}, \by \rangle_\bW|^2}{ \|\bA \bH \by \|_\bW^2  \|\by\|_\bW^2} \right)^{i/2}. 
\end{equation}
\end{theorem}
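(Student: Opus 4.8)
The plan is to derive a single-step contraction estimate
\[
\frac{\|\br_{i+1}\|_\bW^2}{\|\br_i\|_\bW^2} \le 1 - \inf_{\by \neq \mathbf{0}} \frac{|\langle \bA\bH\by, \by \rangle_\bW|^2}{\|\bA\bH\by\|_\bW^2 \, \|\by\|_\bW^2},
\]
valid at each step, and then to obtain~\eqref{eq:RPsuff} by multiplying the single-step estimates across the steps $0,\dots,i-1$ and taking a square root. The whole argument runs at the level of residuals and uses only the minimization property already established.

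The single-step estimate rests on two facts. The first is the residual form of the minimization property from Theorem~\ref{th:WPGCRintro}, namely $\|\br_{i+1}\|_\bW = \min\{\|\br\|_\bW : \br \in \br_0 + \operatorname{span}\{\bq_0,\dots,\bq_i\}\}$. The second, which I would record first, is that $\bA\bH\br_i \in \operatorname{span}\{\bq_0,\dots,\bq_i\}$. This follows from the update $\bq_i = \bA\bz_i - \sum_{j=0}^{i-1}\beta_{i-1,j}\bq_j$ with $\bz_i = \bH\br_i$, rearranged to express $\bA\bH\br_i$ as a linear combination of $\bq_0,\dots,\bq_i$; the case $i=0$ is immediate since $\bq_0 = \bA\bH\br_0$.

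Combining the two, for every scalar $\alpha$ the candidate $\br_i - \alpha\bA\bH\br_i$ lies in $\br_0 + \operatorname{span}\{\bq_0,\dots,\bq_i\}$, because $\br_i$ already lies in $\br_0 + \operatorname{span}\{\bq_0,\dots,\bq_{i-1}\}$. Hence the minimization property gives $\|\br_{i+1}\|_\bW \le \|\br_i - \alpha\bA\bH\br_i\|_\bW$ for all $\alpha$. I would then carry out the elementary one-dimensional minimization over $\alpha$ (legitimate since $\bA\bH\br_i \neq \mathbf{0}$ whenever $\br_i \neq \mathbf{0}$, as $\bA$ and $\bH$ are non-singular), whose optimum yields
\[
\|\br_{i+1}\|_\bW^2 \le \|\br_i\|_\bW^2 - \frac{|\langle \bA\bH\br_i, \br_i \rangle_\bW|^2}{\|\bA\bH\br_i\|_\bW^2}.
\]
Dividing by $\|\br_i\|_\bW^2$ and bounding the quotient below by the infimum over all $\by \neq \mathbf{0}$ (with the choice $\by = \br_i$) produces the single-step estimate.

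The main obstacle is conceptual rather than computational: it is the observation that the descent direction $\bA\bH\br_i$ is itself admissible in the minimization, i.e.\ that it lies in $\operatorname{span}\{\bq_0,\dots,\bq_i\}$. Once this is in hand, the scalar minimization and the telescoping product are routine. I note finally that no assumption $0 \notin W_\bW(\bA\bH)$ is needed here: if $0 \in W_\bW(\bA\bH)$ the infimum merely vanishes and~\eqref{eq:RPsuff} degenerates to the (still correct) statement $\|\br_i\|_\bW \le \|\br_0\|_\bW$, which the contraction argument delivers automatically.
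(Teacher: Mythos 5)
Your proof is correct, but it takes a genuinely different route from the paper's. The paper works entirely inside one iteration of Algorithm~\ref{alg:Worthoright}: from the update $\br_{i+1} = \br_i - \alpha_i\bq_i$ and the orthogonality $\br_{i+1}\perp^\bW\bq_i$ it obtains the exact identity \eqref{eq:RPstep0}, then uses $\langle\bq_i,\br_i\rangle_\bW = \langle\bA\bH\br_i,\br_i\rangle_\bW$ (orthogonality of the earlier $\bq_j$ against $\br_i$) together with the Pythagorean inequality $\|\bA\bH\br_i\|_\bW\geq\|\bq_i\|_\bW$ of \eqref{eq:diffwithMR} to replace $\bq_i$ by $\bA\bH\br_i$ in both numerator and denominator. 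You instead invoke the global minimization property of Theorem~\ref{th:WPGCRintro} and test it against the explicit admissible candidate $\br_i-\alpha\bA\bH\br_i$, which is legitimate precisely because of your key observation that $\bA\bH\br_i\in\operatorname{span}\{\bq_0,\dots,\bq_i\}$; the one-dimensional minimization over $\alpha$ then yields the same per-step bound. Both arguments amount to comparing WP-GCR with a single step of the minimal residual iteration started at $\br_i$. What the paper's version buys is that it transfers almost verbatim to the truncated variants WP-MR and WP-Orthomin($k$) in Theorem~\ref{th:RPconvtrunc}, where the full minimization property you rely on is no longer available and only the one-dimensional optimality of $\alpha_i$ plus a truncated analogue of \eqref{eq:diffwithMR} survive. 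What your version buys is independence from the internal orthogonalization details: it applies to any method satisfying the same minimization property, e.g.\ WP-GMRES directly, without passing through the GCR recurrences. Your closing remark on the case $0\in W_\bW(\bA\bH)$ is consistent with the paper's discussion of breakdowns.
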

\begin{proof}

From the residual update formula we get $\br_{i} = \br_{i+1} + \alpha_i \bq_i$. {By the choice of $\alpha_i$,} $ \br_{i+1} \perp^\bW \bq_i$ so
\[
\|\br_{i} \|_\bW^2 = \|\br_{i+1} \|_\bW^2 + |\alpha_i|^2 \|\bq_i\|_\bW^2 =  \|\br_{i+1} \|_\bW^2 + \frac{|\langle {\bq_{i}}, \br_{i} \rangle_\bW^2|}{\|\bq_i\|_\bW^4 } \|\bq_i\|_\bW^2 . 
\]
The relative decrease in residual between two subsequent iterations is 
\begin{equation}
\label{eq:RPstep0}
\frac{ \|\br_{i+1} \|_\bW^2 }{\|\br_{i} \|_\bW^2} =  1 - \frac{|\langle {\bq_{i}}, \br_{i} \rangle_\bW|^2}{\|\bq_i\|_\bW^2 \|\br_{i} \|_\bW^2 }. 
\end{equation}
Taking the $\bW$-inner product of  $\bq_{i} = \bA \bz_{i} - \sum\limits_{j=0}^{i-1} \bq_j  \beta_{i,j}$ by $\br_i$ leads to 
\[
\langle \bq_{i}, \br_i \rangle_\bW  =\langle \bA \bz_{i}, \br_i \rangle_\bW  -  \sum\limits_{j=0}^{i-1} \beta_{i,j} \langle \bq_j, \br_i \rangle_\bW  = \langle \bA \bz_{i}, \br_i \rangle_\bW, 
\] 
and 
\begin{equation}
\label{eq:RPstep1}
\frac{ \|\br_{i+1} \|_\bW^2 }{\|\br_{i} \|_\bW^2} =  1 - \frac{|\langle {\bA \bz_{i}}, \br_{i} \rangle|_\bW^2}{\|\bq_i\|_\bW^2 \|\br_{i} \|_\bW^2 } =  1 - \frac{|\langle {\bA \bH \br_{i}}, \br_{i} \rangle_\bW|^2}{\|\bq_i\|_\bW^2 \|\br_{i} \|_\bW^2 }  . 
\end{equation}

Next, from the orthogonalisation formula and \eqref{eq:RPorth}, it is deduced that
\begin{equation}
\label{eq:diffwithMR}
 \|\bA \bz_{i} \|_\bW^2=  \| \bq_{i}  + \sum\limits_{j=0}^{i-1} \beta_{i,j} \bq_j \|_\bW^2 =  \| \bq_{i}\|_\bW^2  + \sum\limits_{j=0}^{i-1} |\beta_{i,j}|^2 \| \bq_j \|_\bW^2 \geq \|\bq_{i}\|_\bW^2.
\end{equation}
Finally, the decrease in residual between two subsequent iterations of Algorithm~\ref{alg:Worthoright} is bounded by 
\begin{equation}
\label{eq:Worthoright}
\frac{ \|\br_{i+1} \|_\bW}{\|\br_{i} \|_\bW} \leq \left[ 1 - \frac{|\langle {\bA \bz_{i}}, \br_{i} \rangle|_\bW^2}{ \|\bA \bz_{i} \|_\bW^2  \|\br_{i} \|_\bW^2 } \right]^{1/2} =  \left[ 1 - \frac{|\langle {\bA \bH \br_{i}}, \br_{i} \rangle_\bW|^2}{ \|\bA \bH \br_{i} \|_\bW^2  \|\br_{i} \|_\bW^2 } \right]^{1/2},  
\end{equation}
where the Cauchy-Schwarz inequality ensures that the square root is well defined. 
\end{proof}

Next, this is reformulated to match an often cited result (out of many) in \cite{zbMATH03831185}.
 
\begin{corollary}[Field of Value, or Elman, estimate]
\label{cor:Elman}
Under the assumptions of Theorem~\ref{th:RPconv}, 
the $i$-th iterate of Algorithm~\ref{alg:Worthoright} satisfies
\[
\frac{ \|\br_{i+1} \|_\bW}{\|\br_{0} \|_\bW} \leq \left[ 1 -  \frac{d(0,W_\bW(\bA\bH))^2}{\|\bA\bH \|_\bW^2}\right]^{i/2}, 
\]
where
\begin{itemize}
\item $ d(0,W_\bW(\bA\bH)) =  \inf \left\{ \frac{|\langle {\bA \bH \bu}, \bu \rangle_\bW|}{ \langle \bu, \bu \rangle_\bW} ; \bu \in \mathbb C^n \setminus\{0\} \right\}$ is the distance to zero of the $\bW$-field of values of $\bA \bH$ defined in \eqref{eq:defFOV},
\item ${\|\bA\bH \|_\bW}$ denotes the matrix norm of $\bA\bH$ induced by the vector norm $\bW$.
\end{itemize}
\end{corollary}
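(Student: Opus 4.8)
The plan is to derive the corollary directly from the bound already established in Theorem~\ref{th:RPconv} by separating the single scalar ratio appearing there into the two geometric quantities $d(0,W_\bW(\bA\bH))$ and $\|\bA\bH\|_\bW$. Concretely, it suffices to bound the infimum in Theorem~\ref{th:RPconv} from below by $d(0,W_\bW(\bA\bH))^2 / \|\bA\bH\|_\bW^2$, after which the claim follows by monotonicity.

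First I would fix an arbitrary $\by \neq \mathbf 0$ and rewrite the summand so that the two target quantities appear:
\[
\frac{|\langle {\bA \bH \by}, \by \rangle_\bW|^2}{ \|\bA \bH \by \|_\bW^2  \|\by\|_\bW^2}
= \left(\frac{|\langle \bA\bH\by,\by\rangle_\bW|}{\langle \by,\by\rangle_\bW}\right)^2 \Big/ \frac{\|\bA\bH\by\|_\bW^2}{\|\by\|_\bW^2}.
\]
The numerator is the square of an element of the set defining $d(0,W_\bW(\bA\bH))$ in \eqref{eq:defFOV}, hence bounded below by $d(0,W_\bW(\bA\bH))^2$; the denominator is a Rayleigh-type quotient bounded above by $\|\bA\bH\|_\bW^2$ by the very definition of the $\bW$-induced matrix norm. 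These two bounds give, for every $\by \neq \mathbf 0$,
\[
\frac{|\langle {\bA \bH \by}, \by \rangle_\bW|^2}{ \|\bA \bH \by \|_\bW^2  \|\by\|_\bW^2} \geq \frac{d(0,W_\bW(\bA\bH))^2}{\|\bA\bH\|_\bW^2},
\]
and passing to the infimum on the left preserves the inequality.

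It then remains to feed this lower bound into Theorem~\ref{th:RPconv}. Applying that theorem at step $i+1$ and using that the resulting base lies in $[0,1]$ lowers the exponent from $(i+1)/2$ to $i/2$; since $t \mapsto (1-t)^{i/2}$ is nonincreasing on $[0,1]$, replacing the infimum by its lower bound yields exactly the stated estimate for $\|\br_{i+1}\|_\bW / \|\br_0\|_\bW$. I expect no real obstacle: the argument is a clean separation of a ratio into its field-of-values and norm parts. The only point requiring care is that the bracketed quantity stays in $[0,1]$ so that the fractional power is well defined; this is guaranteed by the Cauchy--Schwarz inequality in the $\bW$-inner product, which gives $|\langle \bA\bH\by,\by\rangle_\bW| \leq \|\bA\bH\by\|_\bW\|\by\|_\bW \leq \|\bA\bH\|_\bW\|\by\|_\bW^2$ and hence $d(0,W_\bW(\bA\bH)) \leq \|\bA\bH\|_\bW$.
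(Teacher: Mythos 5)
Your proposal is correct and follows essentially the same route as the paper: the ratio from Theorem~\ref{th:RPconv} is split into the field-of-values factor, bounded below by $d(0,W_\bW(\bA\bH))$, and the Rayleigh-quotient factor, bounded above by $\|\bA\bH\|_\bW$. Your extra remarks on the exponent ($(i+1)/2$ versus $i/2$) and on Cauchy--Schwarz guaranteeing the bracket lies in $[0,1]$ are sound, if slightly more detail than the paper records.
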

\begin{proof}
The terms in \eqref{eq:RPsuff} can be grouped as
\begin{equation}
 \left[\frac{\langle {\bA \bH \by}, \by \rangle_\bW}{  \|\by\|_\bW^2}\times \frac{  \|\by\|_\bW}{\|\bA \bH \by \|_\bW}  \right]^2 \geq \left[ \frac{d(0,W_\bW(\bA\bH))}{\|\bA\bH \|_\bW} \right]^2, 
\end{equation}
where the numerator minimizes the first term in the product and the denominator maximizes the inverse of the second. 
\end{proof}

The result of Theorem~\ref{th:RPconv} is stronger than the field of value estimate in Corollary~\ref{cor:Elman} {as discussed in \cite{arxiv12115969} and \cite{zbMATH05080488}.} Indeed, a bound for $C$ in Theorem~\ref{th:RPconv} can be found without necessarily bounding $d(0,W_\bW(\bA\bH))$ and $\|\bA\bH \|_\bW$ independently. {Another way of saying this is that the terms in $C$ can be grouped differently than in the field of value bound. This is done in the next section and was initially proposed by \cite{zbMATH01096035}.} 

\begin{remark}[Breakdown and equivalence with WP-GMRES]
If $0$ is in the $\bW$-field of values of $\bH \bA$, the right hand side of the Elman estimate is $1$ rendering it useless. In fact, the proof in \cite{zbMATH03831185} makes the assumption that the $\bA$ is positive-definite so the case where $0$ is in the field of values is not considered. Still, the formula is not incorrect because it states that $\br_{i+1} \leq \br_i$.

If $0$ is in the $\bW$-field of values of $\bH\bA$, it can occur that $\gamma_i = \langle \bq_i, \br_i \rangle_\bW =  \langle \bA \bH \br_i -  \sum\limits_{j=0}^{i-1} \Phi_{ij}/\delta_j  \bq_j \rangle_\bW = \langle \bA \bH \br_i, \br_i \rangle_\bW = 0$. Then the residual does not get updated ($\br_{i+1} = \br_i$) and the next search direction is $\bA \bH \br_i$ orthogonalized against all previous ones, including itself. In other words $\bp_{i+1} = \bq_{i+1} = \mathbf 0$. The algorithm has broken down before zero-ing the residual. This is a particularity of the way the search directions are computed in WP-GCR. These unlucky breakdowns do not occur in WP-GMRES if $\bA$ is non-singular (a sufficient but not necessary condition). If an unlucky breakdown occurs in GCR, the algorithm can be restarted by computing the next few search directions as in the weighted and preconditioned Orthodir algorithm.  

A very simple way of understanding that WP-GCR can breakdown is to consider taking $\br_0$ such that $\langle \br_0, \bA \bH \br_0 \rangle_\bW = 0$.  
\end{remark}

\begin{remark}
Minimizing $|\langle \bH \bA \bx, \bx \rangle_\bW| / \langle \bH \bA \bx,\bH \bA  \bx \rangle_\bW $ is equivalent to minimizing\\ $|\langle \by, (\bH \bA)^{-1} \by \rangle_\bW| / \langle \by, \by \rangle_\bW $ by the change of variables $\by = \bH \bA \bx$. This is a way of recovering that the WP-GMRES residual is bounded with respect to the product of the distances to zero of the $\bW$-fields of value of $\bA \bH$ and of $(\bA \bH)^{-1}${ as in \cite{zbMATH01096035}}. 
\end{remark}

\subsection{Restarted and Truncated versions}
\label{sec:restart}

Within WP-GCR as well as WP-GMRES, the new search directions are orthogonalized against all previous ones. The cost of this procedure in terms of computation and storage can become prohibitive if the algorithm takes many iterations to converge. Well established variants of the algorithms have been proposed as early as in \cite{zbMATH03831185,vinsome1976orthomin} where either the orthogonalization is truncated (weighted and preconditioned Orthomin{($k$)}, or WP-Orthomin{($k$)}) or the algorithm is restarted every $k$ iterations (WP-GMRES{($k$)}, WP-GCR{($k$)}). If $k=0$, no orthogonalization is performed at all and the algorithm is called the {weighted and preconditioned Minimal Residual iteration (WP-MR)}. A fact that is not so frequently known is that the convergence result given in Theorem~\ref{th:RPconv} holds for all restarted and truncated versions of the algorithms. This was already fully understood by \cite{zbMATH03831185}. 

\begin{theorem}[Convergence of truncated and restarted versions]
\label{th:RPconvtrunc}

The result in Theorem~\ref{th:RPconv} holds for WP-GCR, WP-GMRES as well as all their truncated and restarted versions including WP-MR.
\end{theorem}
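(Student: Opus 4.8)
The plan is to reduce everything to the single per-step estimate \eqref{eq:Worthoright} that already drives the proof of Theorem~\ref{th:RPconv}. Indeed, the bound \eqref{eq:RPsuff} is obtained purely by bounding the ratio in \eqref{eq:Worthoright} below by $\inf_{\by\neq\mathbf 0}|\langle\bA\bH\by,\by\rangle_\bW|^2/(\|\bA\bH\by\|_\bW^2\|\by\|_\bW^2)$ and multiplying the resulting per-step factors over the iterations. This chaining is insensitive to how the search directions are generated: it only requires that, at every iteration $i$ and for every variant,
\[
\frac{\|\br_{i+1}\|_\bW}{\|\br_i\|_\bW}\leq\left[1-\frac{|\langle\bA\bH\br_i,\br_i\rangle_\bW|^2}{\|\bA\bH\br_i\|_\bW^2\|\br_i\|_\bW^2}\right]^{1/2}.
\]
So I would first state that it suffices to re-establish this inequality for WP-MR, for the restarted versions, and for the truncated versions, since the full WP-GCR/WP-GMRES case is already Theorem~\ref{th:RPconv}.

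Next I would recall that in Theorem~\ref{th:RPconv} this inequality rests on exactly three ingredients: the exact decrease formula \eqref{eq:RPstep0}, which comes solely from the residual update $\br_{i+1}=\br_i-\alpha_i\bq_i$ with the minimal-residual choice of $\alpha_i$ and is therefore \emph{identical} in every variant; the numerator identity $\langle\bq_i,\br_i\rangle_\bW=\langle\bA\bH\br_i,\br_i\rangle_\bW$; and the denominator inequality $\|\bq_i\|_\bW\leq\|\bA\bH\br_i\|_\bW$ coming from the Pythagorean relation \eqref{eq:diffwithMR}. Hence the only thing to check for each variant is that the last two still hold. For WP-MR ($k=0$) there is no orthogonalization, so $\bq_i=\bA\bz_i=\bA\bH\br_i$ and both are trivial equalities. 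For the restarted versions WP-GCR($k$)/WP-GMRES($k$), each cycle is literally full WP-GCR started anew from the current residual; within a cycle the two ingredients hold verbatim as in Theorem~\ref{th:RPconv}, and the restart merely resets the orthogonalization set, leaving the per-step inequality intact at every iteration.

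The truncated case WP-Orthomin($k$) is the one genuinely new point and where the main work lies. Here $\bq_i$ is orthogonalized only against the window $\bq_{i-k},\dots,\bq_{i-1}$, so the numerator identity requires $\langle\bq_l,\br_i\rangle_\bW=0$ for these $l$ only, rather than for all $l<i$. I would prove this local orthogonality by induction on $i$: the claim is that $\langle\bq_l,\br_i\rangle_\bW=0$ for every $l$ with $\max(0,i-k)\leq l\leq i-1$. The base case is the minimal-residual orthogonality $\br_i\perp^\bW\bq_{i-1}$; the inductive step uses $\langle\bq_l,\br_{i+1}\rangle_\bW=\langle\bq_l,\br_i\rangle_\bW-\alpha_i\langle\bq_l,\bq_i\rangle_\bW$ together with the induction hypothesis and the fact that $\bq_i$ is built $\bW$-orthogonal to the last $k$ directions. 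Given this, $\langle\bq_i,\br_i\rangle_\bW=\langle\bA\bz_i,\br_i\rangle_\bW=\langle\bA\bH\br_i,\br_i\rangle_\bW$ as desired. For the denominator inequality I would observe that any two directions whose indices differ by at most $k$ are mutually $\bW$-orthogonal by construction, so the whole window $\bq_{i-k},\dots,\bq_i$ is a $\bW$-orthogonal family; consequently $\bA\bz_i=\bq_i+\sum_{l=i-k}^{i-1}\beta_{i,l}\bq_l$ is a Pythagorean decomposition and $\|\bA\bz_i\|_\bW\geq\|\bq_i\|_\bW$, exactly as in \eqref{eq:diffwithMR}.

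The main obstacle is precisely bookkeeping this induction: one must verify that the residual stays $\bW$-orthogonal to the retained window even though it is never explicitly orthogonalized against those directions, and that the window remains a $\bW$-orthogonal family so that the Pythagorean step survives truncation. Once these two local facts are in place, the per-step inequality holds for every variant, and multiplying the factors (after bounding the ratio by the global infimum over $\by$) reproduces \eqref{eq:RPsuff}, which is the assertion of the theorem.
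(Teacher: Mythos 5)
Your proposal is correct and follows essentially the same route as the paper: reduce everything to the per-step decrease formula \eqref{eq:RPstep0}, which depends only on the residual update and the minimal-residual choice of $\alpha_i$, then check that the numerator identity and the Pythagorean inequality \eqref{eq:diffwithMR} survive restarting and truncation. The only difference is one of detail: where the paper dismisses WP-Orthomin($k$) with ``easily adapted by truncating the sums,'' you carry out the induction showing that the residual stays $\bW$-orthogonal to the retained window and that the window remains a $\bW$-orthogonal family — a worthwhile elaboration, but not a different argument.
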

\begin{proof}
{The numbering convention from \cite{zbMATH03831185} is followed. 
\\
\textit{Restarted algorithms: WP-GCR($k$) or WP-GMRES($k$)} During the first cycle (\textit{i.e.}, up to the computation of $\br_{k+1}$ included), Theorem~\ref{th:RPconv} applies. The second cycle consists in $k+1$ iterations of GMRES with initial guess $\bx_{k+1}$. For $k+2 \leq i \leq 2k+2$, the result in Theorem~\ref{th:RPconv} holds since 
\[
 \frac{ \|\br_{k+1} \|_\bW}{\|\br_{0} \|_\bW} \times \frac{ \|\br_{i} \|_\bW}{\|\br_{k+1} \|_\bW} \leq \left( 1 -  \inf\limits_{\by \neq \mathbf 0}  \frac{|\langle {\bA \bH \by}, \by \rangle_\bW|^2}{ \|\bA \bH \by \|_\bW^2  \|\by\|_\bW^2} \right)^{\frac{k+1}{2} + \frac{i-(k+1)}{2}}.  
\]  
and this argument generalizes to any number of restarts. 
\\
\textit{WP-MR and WP-Orthomin($k$)} 
 These algorithms are defined by replacing the formulae for the search directions in Algorithm~\ref{alg:Worthoright} by $\bp_{i+1} = \bz_{i+1} $ for WP-MR and $ \bp_{i+1} = \bz_{i+1}   - \sum\limits_{\max (0, i-k+1)}^{i}\beta_{i,j} \bp_j  $ for WP-Orthomin($k$). The formulae for $\bq_i$ are also modified so that $\bq_i  = \bA \bp_i$.   
Equation \eqref{eq:RPstep0} in the proof of Theorem~\ref{th:RPconv} still holds because the update formula is unchanged. The rest of the proof is direct for WP-MR (by setting $\bq_i = \bA \bH \br_i$) or easily adapted for Orthomin($k$) (by truncating the sums).  
}
\end{proof}

Equations~\eqref{eq:RPstep1} and \eqref{eq:diffwithMR} explain how orthogonalization helps: decreasing $\|\bq_i\|_\bW$ leads to a decrease in the residual and orthogonalization does just that. Full orthogonalization provides the best choice of $\bq_i$ in $\bA \mathcal K_i$. A very important realization is that the field of value convergence bound, as well as all the convergence bounds in this article do not account for the advantages provided by orthogonalization. For this reason they are expected to be over-pessimistic when applied to WP-GMRES and WP-GCR.
 
Two conclusions can be drawn from this. Either the decrease in residual predicted by the convergence bound is satisfactory, then WP-MR, or another truncated or restarted algorithm can be applied. Or, the decrease guaranteed by the theory is not satisfactory, then full WP-GCR/WP-GMRES can be applied with the hope that the bound is over-pessimistic. In practice this is very likely to be the case as WP-GCR/WP-GMRES often exhibits a superlinear convergence behaviour. The convergence bounds of the form presented in this article allow to check that the algorithm cannot stagnate (or near-stagnate), and the superlinear convergence behaviour should kick in. 

All subsequent convergence proofs follow from manipulating the minimized quantity in Theorems~\ref{th:RPconv} and~\ref{th:RPconvtrunc} so they hold also for truncated and restarted versions of the algorithms.  

\subsection{A parenthesis about left preconditioning}

Left preconditioning, \textit{i.e.} solving $\bH \bA {\bx} = \bH \bb$ can be performed instead of right preconditioning. For completeness, the left preconditioned WP-GCR is presented in Algorithm~\ref{alg:Wortholeft}. 
\begin{algorithm}
\caption{WP-GCR with left preconditioning}
\label{alg:Wortholeft}
\begin{algorithmic}
\REQUIRE $\bx_0 \in \mathbb R^n$
\STATE $\br_0= \bb - \bA \bx_{0}$ 
\STATE $\bz_0 = \bH \br_0$ 
\STATE $\bp_0 = \bz_0$
\STATE $\by_0 = \bH \bA \bp_{0}$
\FOR{$i = 0,\,1,\, \dots,\;$convergence}
  \STATE$ \delta_i = \langle \by_i, \by_{i}\rangle_\bW $; \quad $\gamma_i = \langle {\by_{i}}, \bz_{i} \rangle_\bW$; \quad $\alpha_i =  \gamma_i/\delta_i $
  \STATE$ \bx_{i+1} = \bx_{i}+ \alpha_i \bp_i$ 
  \STATE $\bz_{i+1} = \bz_i- \alpha_i \by_i$ 
  \FOR{$j = 0,\,\dots,\, i$}
\STATE  $\Phi_{i,j} = \langle \by_j, \bH \bA \bz_{i+1} \rangle_\bW$; \quad $\beta_{i,j} = \Phi_{i,j}/ \delta_j$
\ENDFOR
\STATE  $\bp_{i+1} = \bz_{i+1} - \sum\limits_{j=0}^{i}\beta_{i,j} \bp_j  $
\STATE  $\by_{i+1} = \bH \bA \bz_{i+1} - \sum\limits_{j=0}^{i} \beta_{i,j}\by_j  $
\ENDFOR
\RETURN{$\bx_{i+1}$}
\end{algorithmic}
\end{algorithm}
{
The  $i$-th residual of Algorithm~\ref{alg:Wortholeft} satisfies
\[
\| \bz_i \|_{\bW} = \|\bH \br_i \|_{\bW} = \operatorname{min}\left\{ \| \bH( \mathbf b - \bA \bx)  \|_{\bW} ; \, {\bx\in\bx_{0} + \operatorname{span} \{\bp_0, \dots, \bp_{i-1}\}}\right\} . 
\]
Moreover, if $0 \not \in  W_\bW(\bH\bA)$, then  $ \operatorname{span} \{\bp_0, \dots, \bp_{i-1}\} = \mathcal K_i$ (the Krylov subspace) so again WP-GCR returns the same approximate solutions as WP-GMRES. 
}
Following the same steps as in Section~\ref{subs:WPGCRright}, it can be proved that 
\[
\frac{\|\bz_{i+1}\|_\bW^2}{\| \bz_i\|_\bW^2} = 1 - \frac{\langle \by_i , \bz_i \rangle_\bW^2}{\langle \by_i, \by_i \rangle_\bW \langle \bz_i, \bz_i \rangle_\bW} \leq  1 - \frac{\langle \bH \bA \bz_i , \bz_i \rangle_\bW^2}{\langle \bH \bA \bz_i, \bH \bA \bz_i \rangle_\bW \langle \bz_i, \bz_i \rangle_\bW} . 
\]
The following observations follow:
\begin{itemize}
\item Left preconditioning produces the same Krylov subspace as right preconditioning.
\item Left preconditioning modifies the residual that is considered in the minimization property as well as the Krylov subspace. 
\item Left preconditioning in the $\bH^{-1}$-inner product and right-preconditioning in the $\bH$-inner product are equivalent (as suggested in \cite{zbMATH01953444}[Problem 9.13]). 
\end{itemize}

\section{Hermitian positive definite preconditioning for positive definite $\bA$}
The convergence study now focuses on some not completely general cases. 
\subsection{Hermitian positive definite preconditioning} 
\label{sec:SymPrec}
Two strong assumptions are made:
\begin{enumerate}
\item the preconditioner $\bH$ is hpd,
\item the inner product is induced by the preconditioner: $\bW = \bH$. 
\end{enumerate}

WP-GCR with right hpd preconditioning and $\bW = \bH$ takes the form of Algorithm~\ref{alg:orthosymprecW}. The name WHP-GCR is adopted where HP stands for Hermitian Preconditioning. The operations have been reorganized so that no additional application of $\bH$ is required compared to unweighted GCR (except in the initialization). The extra cost is the storage of the vectors $\by_j = \bH \bq_j$. Two alternate versions are presented in Algorithms~\ref{alg:altorthosymprecW} and~\ref{alg:altbisorthosymprecW} (of the appendix) that do not require more storage than unweighted GCR. Although no details are given here, the same cost saving measures can be taken in a GMRES algorithm.  Note also that if the preconditioner $\bH$ is very cheap to apply, applying it twice per iteration may be entirely feasible. In this case, it suffices to run a right preconditioned GCR or GMRES code with the inner product changed to $\langle \cdot, \cdot \rangle_\bH$. 

In application of Theorem~\ref{th:WPGCRintro}, WHP-GCR (Algorithm~\ref{alg:orthosymprecW}) is characterized as a Krylov subspace method by the following properties. {First, the vectors $\bq_i =  \bA\bp_i$ are pairwise orthogonal in the $\bH$-inner product.} Second, the residuals satisfy the minimization property: 
$\| \br_i \|_\bH = \operatorname{min}\left\{ \| \mathbf b - \bA \bx \|_{ \bH } ; \, {\bx\in\bx_{0}  + \mathcal K_i} \right\} \text{ if $0 \not \in W_\bH(\bA\bH) $}$.

WP-GCR with right preconditioning by $\bH$ and weighting by $\bW = \bH$ is equivalent to WP-GCR with left preconditioning by $\bH$ and weighting by $\bW = \bH^{-1}$. For this reason the distinction between left and right preconditioning is no longer made. In \cite{zbMATH05626642}, a similar equivalence is observed for an inner product that arises from a symmetric part of the problem matrix $\bA$. 

\begin{algorithm}
\caption{WHP-GCR (\textit{i.e}, WP-GCR with hpd $\bH$ and $\bW = \bH$)} 
\label{alg:orthosymprecW}
\begin{algorithmic}
\REQUIRE $\bx_0 \in \mathbb R^n$
\STATE $\br_0= \bb - \bA \bx_{0}$ 
\STATE $\bz_0 = \bH \br_0$ 
\STATE $\bp_0 = \bz_0$
\STATE $\bq_0 = \bA \bp_0$
\STATE $\by_0 = \bH \bq_{0}$
\FOR{$i = 0,\,1,\, \dots,\;$convergence}
\STATE  $\delta_i = \langle \by_i, \bq_{i}\rangle $; \quad $\gamma_i = \langle {\bq_{i}}, \bz_{i} \rangle$; \quad $\alpha_i =  \gamma_i/\delta_i $
\STATE   $\bx_{i+1} = \bx_{i}+ \alpha_i \bp_i$ 
\STATE   $\br_{i+1} = \br_{i}- \alpha_i \bq_i$ 
\STATE   $\bz_{i+1} = \bz_i- \alpha_i \by_i$ 
\STATE   $\bp_{i+1} = \bz_{i+1}$
\STATE   $\bq_{i+1} = \bA \bz_{i+1}$ 
  \FOR{$j = 0,\,\dots,\, i$}
\STATE  $\Phi_{i,j} = \langle \by_j,   \bq_{i+1} \rangle$; \quad $\beta_{i,j} = \Phi_{i,j}/ \delta_j^{-1} $
\STATE  $\bp_{i+1} -= \sum\limits_{j=0}^{i}\beta_{i,j} \bp_j  $
\STATE  $\bq_{i+1} -= \sum\limits_{j=0}^{i}\beta_{i,j} \bq_j $ 
  \ENDFOR
\STATE $\by_{i+1} = \bH \bq_{i+1}$ 
\ENDFOR
\RETURN{$\bx_{i+1}$}
\end{algorithmic}
\end{algorithm}

\begin{theorem}[Convergence of WHP-GCR]
\label{th:HPconv}
Assume that the operator $\bA \in \mathbb K^{n\times n}$ is non-singular. Assume also that the preconditioner $\bH \in \mathbb K^{n\times n}$ is hpd and that right preconditioned GCR is applied in the inner product induced by $\bH$.
The $i$-th iterate of Algorithm~\ref{alg:orthosymprecW} satisfies
\begin{equation}
\label{eq:HPsuff}
\frac{\| \br_{i+1} \|_\bH}{\| \br_{0} \|_\bH}\leq \left[ 1- \inf\limits_{\by\neq 0} \frac{|\langle \bM( \bA^{-1}) \by, \by \rangle|}{ \langle \bH \by, \by \rangle} \times \inf\limits_{\by\neq 0} \frac{|\langle \bM(\bA) \by, \by \rangle|}{ \langle \bH^{-1} \by, \by \rangle}\\\right]^{i/2}. 
\end{equation}
The same result holds for all truncated and restarted versions of WHP-GCR and WHP-GMRES, including WHP-MR (\textit{i.e.}, WP-MR with $\bW = \bH$ hpd). 
\end{theorem}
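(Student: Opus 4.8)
The plan is to derive \eqref{eq:HPsuff} directly from Theorem~\ref{th:RPconv} specialized to $\bW = \bH$, without revisiting the algorithm. Writing
\[
C := \inf_{\by \neq \mathbf 0} \frac{|\langle \bA\bH\by, \by\rangle_\bH|^2}{\|\bA\bH\by\|_\bH^2\, \|\by\|_\bH^2},
\]
the bound of Theorem~\ref{th:RPconv} reads $\|\br_i\|_\bH/\|\br_0\|_\bH \le (1-C)^{i/2}$, and since this is decreasing in $C$, it suffices to show that $C$ is bounded below by the product of the two infima appearing in \eqref{eq:HPsuff}. Everything then follows, and the extension to truncated and restarted variants is free because, as noted after Theorem~\ref{th:RPconvtrunc}, the per-step constant $C$ is the only object being manipulated.

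First I would unwind $\langle\cdot,\cdot\rangle_\bH = \langle \bH\cdot,\cdot\rangle$ and split the quotient into a product of two nonnegative factors sharing the numerator $|\langle \bH\bA\bH\by,\by\rangle|$:
\[
\frac{|\langle \bA\bH\by, \by\rangle_\bH|^2}{\|\bA\bH\by\|_\bH^2\, \|\by\|_\bH^2}
= \frac{|\langle \bH\bA\bH\by, \by\rangle|}{\langle \bH\by, \by\rangle}\cdot\frac{|\langle \bH\bA\bH\by, \by\rangle|}{\langle \bH\bA\bH\by, \bA\bH\by\rangle}
=: \phi(\by)\,\psi(\by).
\]
Because both factors are nonnegative, $C = \inf_\by(\phi\psi) \ge (\inf_\by \phi)(\inf_\by \psi)$; this separation is the single step where the bound is deliberately loosened, and it is what produces the clean product form of \eqref{eq:HPsuff}.

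Next I would reduce each factor by a change of variables, exactly as recorded in the Remark following Corollary~\ref{cor:Elman}. For $\phi$, substituting $\bz = \bH\by$ and using $\bH = \bH^*$ gives $\phi = |\langle \bA\bz, \bz\rangle| / \langle \bH^{-1}\bz, \bz\rangle$; for $\psi$, substituting $\bw = \bA\bH\by$ (legitimate since $\bA\bH$ is non-singular) gives $\psi = |\langle \bA^{-1}\bw, \bw\rangle| / \langle \bH\bw, \bw\rangle$. The last ingredient is the identity $\langle \bM(\bB)\bu, \bu\rangle = \operatorname{Re}\langle \bB\bu, \bu\rangle$ for any $\bB$, which yields $|\langle \bB\bu, \bu\rangle| \ge |\langle \bM(\bB)\bu, \bu\rangle|$. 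Applying it with $\bB=\bA$ in $\phi$ and $\bB=\bA^{-1}$ in $\psi$ replaces each full quadratic form by its Hermitian-part form, so that $\inf_\by\phi \ge \inf_\bz |\langle \bM(\bA)\bz,\bz\rangle|/\langle \bH^{-1}\bz,\bz\rangle$ and $\inf_\by\psi \ge \inf_\bw |\langle \bM(\bA^{-1})\bw,\bw\rangle|/\langle \bH\bw,\bw\rangle$. Multiplying the two lower bounds gives precisely the product in \eqref{eq:HPsuff}.

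I expect the main obstacle to be bookkeeping rather than conceptual: the two changes of variables must be carried through the Hermitian inner product while keeping the complex conjugations straight (the modulus conveniently absorbs them, so only the real case collapses $\langle\bA\bu,\bu\rangle$ exactly to $\langle\bM(\bA)\bu,\bu\rangle$). The one genuinely important inequality is the passage from $\bA$ to $\bM(\bA)$ through the real part, since this is exactly what allows the statement to hold verbatim in the complex case, where $\langle \bA\by,\by\rangle \ne \langle \bM(\bA)\by,\by\rangle$ in general.
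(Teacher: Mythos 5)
Your proposal is correct and follows essentially the same route as the paper's proof: rewrite the $\bH$-inner products as Euclidean ones, bound the infimum of the product by the product of infima, perform the changes of variables $\by \mapsto \bH\by$ and $\by \mapsto \bA\bH\by$ (the paper does the $\bH$-substitution globally before splitting and the $\bA$-substitution afterwards, which is only a reordering of the same steps), and finish with $|\langle \bB\bu,\bu\rangle| \geq |\operatorname{Re}\langle \bB\bu,\bu\rangle| = |\langle \bM(\bB)\bu,\bu\rangle|$. The justification for the truncated and restarted variants also matches the paper's.
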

\begin{proof}
Applying Theorem~\ref{th:RPconv} to WHP-GCR, the quantity that must be bounded can be rewritten as 
\begin{align*}
\inf\limits_{\by\neq 0} \frac{|\langle {\bA \bH \by}, \by \rangle_\bH|^2}{ \|\bA \bH \by \|_\bH^2  \|\by\|_\bH^2} &=  \inf\limits_{\by\neq 0} \frac{|\langle \bH \bA \bH \by, \by \rangle|^2}{ \langle \bH \bA \bH \by, \bA \bH \by \rangle \langle \bH \by, \by \rangle} \\ 
 &=  \inf\limits_{\by\neq 0} \frac{|\langle \bA \by, \by \rangle|^2}{ \langle \bH \bA \by, \bA \by \rangle \langle \bH^{-1} \by, \by \rangle}\hfill \text{ (by $\by \leftarrow \bH\by$)} \\ 
 &\geq \inf\limits_{\by\neq 0} \frac{|\langle \bA \by, \by \rangle|}{ \langle \bH \bA \by, \bA \by \rangle}\times  \inf\limits_{\by\neq 0} \frac{|\langle \bA \by, \by \rangle|}{ \langle \bH^{-1} \by, \by \rangle}\\
 &= \inf\limits_{\by\neq 0} \frac{|\langle \bA^{-1} \by, \by \rangle|}{ \langle \bH \by, \by \rangle} \times \inf\limits_{\by\neq 0} \frac{|\langle \bA \by, \by \rangle|}{ \langle \bH^{-1} \by, \by \rangle}\hfill \text{ (by $\by \leftarrow \bA\by$)} \\ 
 &{\geq \inf\limits_{\by\neq 0} \frac{|\langle \bM( \bA^{-1}) \by, \by \rangle|}{ \langle \bH \by, \by \rangle} \times \inf\limits_{\by\neq 0} \frac{|\langle \bM(\bA) \by, \by \rangle|}{ \langle \bH^{-1} \by, \by \rangle}
}
\end{align*}
{
where, $\bM(\bA)$ and  $\bM(\bA^{-1})$ are the Hermitian parts of $\bA$ and $\bA^{-1}$ (as introduced in \eqref{eq:hsh})  and $|\langle \bB \by, \by \rangle| \geq| \operatorname{Re}(\langle \bB \by, \by \rangle) | =  |\langle \bM(\bB) \by, \by \rangle|  $ for any matrix $\bB \in \mathbb K^{n\times n}$. 
}
\end{proof}

\subsection{Positive definite $\bA$}
\label{sec:PD}

Assuming that the problem matrix $\bA$ is positive definite, \textit{i.e.}, that $\bM(\bA)$ is hpd, the calculations from the previous paragraph are resumed: 
\begin{align*}
\inf\limits_{\by\neq 0} \frac{\langle {\bA \bH \by}, \by \rangle_\bW^2}{ \|\bA \bH \by \|_\bW^2  \|\by\|_\bW^2} 
 &\geq \inf\limits_{\by\neq 0} \frac{|\langle \bM( \bA^{-1}) \by, \by \rangle|}{ |\langle \bM(\bA)^{-1} \by, \by \rangle |} &\times \inf\limits_{\by\neq 0} \frac{|\langle \bM(\bA)^{-1}  \by, \by \rangle|}{ \langle \bH \by, \by \rangle}\\
& &  \times \inf\limits_{\by\neq 0} \frac{|\langle \bM(\bA) \by, \by \rangle|}{ \langle \bH^{-1} \by, \by \rangle}\\
 &\geq \inf\limits_{\by\neq 0} \frac{\langle \bM( \bA^{-1}) \by, \by \rangle}{ \langle \bM(\bA)^{-1} \by, \by \rangle } &\times \inf\limits_{\by\neq 0} \frac{\langle \bM(\bA)^{-1}  \by, \by \rangle}{ \langle \bH \by, \by \rangle}\\
& & \times \inf\limits_{\by\neq 0} \frac{\langle \bM(\bA) \by, \by \rangle}{ \langle \bH^{-1} \by, \by \rangle},
\end{align*}
{where division by $\langle \bM(\bA)^{-1} \by, \by \rangle$ is not by zero} and removing the absolute values does not change the result. Indeed, by  \cite{johnson1972matrices}[Property (1.6) on page 10],  $\bM(\bA)$ being positive definite implies that $\bA^{-1}$ is well defined and that $\bM(\bA^{-1})$ is also positive definite.

Let $\lambda_{\min} (\bH\bM(\bA))$ and $\lambda_{\max} (\bH\bM(\bA))$ denote the smallest and largest eigenvalues of $\bH \bM(\bA)$. The eigenvalues of $\bH\bM(\bA)$, are also the eigenvalues of the generalized eigenvalue problems $ \bM(\bA)\bH\bM(\bA) \by = \lambda \bM(\bA) \by$ and $ \bH\bM(\bA)\bH \by = \lambda \bH \by$. By the Rayleigh-Ritz characterization of eigenvalues, an interpretation of the last two terms in the product follows:
\[
\inf\limits_{\by\neq 0} \frac{\langle \bM(\bA) \by, \by \rangle}{ \langle \bH^{-1} \by, \by \rangle} = \inf\limits_{\by\neq 0} \frac{\langle \bH \bM(\bA) \bH \by, \by \rangle}{ \langle \bH \by, \by \rangle} = \lambda_{\min} (\bH\bM(\bA)).
\]
and
\[
\inf\limits_{\by\neq 0} \frac{\langle \bM(\bA)^{-1}  \by, \by \rangle}{ \langle \bH \by, \by \rangle} =  \left(\sup\limits_{\by\neq 0} \frac{ \langle  \bM(\bA)\bH\bM(\bA) \by, \by \rangle}{ \langle  \bM(\bA) \by, \by \rangle}\right)^{-1} = \frac{1}{\lambda_{\max} (\bH\bM(\bA))}, 
\]

For the remaining term, recalling from \eqref{eq:hsh} that $\bN(\bA)$ denotes the skew Hermitian part of $\bA$, a very useful result is the following.
\begin{lemma}[Corollary 3 in \cite{zbMATH03489299}, see also \cite{zbMATH03389168,johnson1972matrices}]
Let $\bA \in \mathbb C^n$ be positive definite and $c \in \mathbb R$. The matrix $c \bM(\bA ^{-1}) - \bM(\bA)^{-1}$  is positive definite
 if and only if $c > 1 + \rho(\bM(\bA)^{-1} \bN(\bA)) ^2$, with 
\[
\text{$\rho(\bM(\bA)^{-1} \bN(\bA))  = \max \{|t_j|; \, \pm i t_j$ are the eigenvalues of $\bM(\bA)^{-1} \bN(\bA) $\}.} 
\]
\end{lemma}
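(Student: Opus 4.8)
The plan is to strip the problem down, by definiteness-preserving congruence transformations, to a question about the spectrum of a single skew-Hermitian matrix. Throughout, abbreviate $\bM := \bM(\bA)$, which is hpd by the positive-definiteness assumption, and $\bN := \bN(\bA)$, which is skew-Hermitian, so that $\bA = \bM + \bN$ and $\bA^* = \bM - \bN$. Note first that $c\,\bM(\bA^{-1}) - \bM^{-1}$ is Hermitian (a real combination of a Hermitian part and the inverse of an hpd matrix), so asking for positive definiteness is meaningful.

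The first step is to write $\bM(\bA^{-1})$ in closed form. Since $\bA^{-*} = (\bM - \bN)^{-1}$, placing the two inverses over a common factor gives
\[
\bM(\bA^{-1}) = \tfrac12\big[(\bM+\bN)^{-1} + (\bM-\bN)^{-1}\big] = (\bM-\bN)^{-1}\,\bM\,(\bM+\bN)^{-1}.
\]
Congruence by the invertible matrix $\bP := \bM + \bN$ (so $\bP^* = \bM - \bN$) preserves definiteness, and expanding, with the help of the identity $(\bM-\bN)\bM^{-1}(\bM+\bN) = \bM - \bN\bM^{-1}\bN$ (the cross terms $\pm\bN$ cancel), collapses the matrix of interest to
\[
\bP^*\big[c\,\bM(\bA^{-1}) - \bM^{-1}\big]\bP = (c-1)\bM + \bN\bM^{-1}\bN.
\]

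A second congruence, by $\bM^{-1/2}$, sends the right-hand side to $(c-1)\bI + \bK^2$, where $\bK := \bM^{-1/2}\bN\bM^{-1/2}$. Since $\bN^* = -\bN$, the matrix $\bK$ is skew-Hermitian, hence unitarily diagonalizable with purely imaginary spectrum $\{\,i\mu_1,\dots,i\mu_n\,\}$, $\mu_j \in \mathbb R$; consequently $\bK^2$ is Hermitian with eigenvalues $-\mu_j^2 \le 0$. Therefore $(c-1)\bI + \bK^2$, and with it the original matrix, is positive definite precisely when $(c-1) - \mu_j^2 > 0$ for all $j$, i.e. when $c > 1 + (\max_j|\mu_j|)^2$.

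It remains only to recognize the threshold. Because $\bK = \bM^{1/2}(\bM^{-1}\bN)\bM^{-1/2}$ is similar to $\bM^{-1}\bN$, the two matrices have the same spectrum $\{i\mu_j\}$; in particular $\bM^{-1}\bN$ has purely imaginary eigenvalues (justifying the notation $\pm i t_j$) and $\rho(\bM^{-1}\bN) = \max_j|\mu_j|$. Substituting turns the condition into $c > 1 + \rho(\bM(\bA)^{-1}\bN(\bA))^2$, as claimed. The one genuinely delicate point is the first congruence: one must verify the algebraic collapse of $\bP^*[c\,\bM(\bA^{-1})-\bM^{-1}]\bP$ to $(c-1)\bM + \bN\bM^{-1}\bN$ and confirm the cancellation of the first-order terms in $\bN$; everything afterwards is routine spectral bookkeeping.
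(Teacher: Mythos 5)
Your proof is correct, and it is worth noting that the paper itself offers no proof of this lemma at all: it is imported verbatim as Corollary~3 of the cited reference (Johnson's work on matrices with positive definite Hermitian part), so there is nothing in-paper to compare against. Your congruence argument is a clean, self-contained derivation of that external result. All three key computations check out: the common-factor identity $\bM(\bA^{-1}) = (\bM-\bN)^{-1}\bM(\bM+\bN)^{-1}$; the collapse $\bP^*\bigl[c\,\bM(\bA^{-1})-\bM^{-1}\bigr]\bP = (c-1)\bM + \bN\bM^{-1}\bN$ with $\bP=\bA$ (the cross terms in $\bN$ do cancel, since $(\bM-\bN)\bM^{-1}(\bM+\bN)=\bM-\bN\bM^{-1}\bN$); and the reduction via $\bM^{-1/2}$ to $(c-1)\bI+\bK^2$ with $\bK$ skew-Hermitian and similar to $\bM^{-1}\bN$. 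Since congruence by an invertible matrix preserves positive definiteness in both directions, the ``if and only if'' survives each step, and the threshold $c>1+\rho(\bM^{-1}\bN)^2$ follows. One cosmetic caveat: the pairing of eigenvalues as $\pm i t_j$ holds automatically in the real case (complex eigenvalues of a real matrix come in conjugate pairs) but is not forced for a general complex skew-Hermitian $\bK$; your proof does not actually need the pairing, only that the spectrum is purely imaginary, so this does not affect the argument. The payoff of your route is that the reader gets an elementary, purely linear-algebraic proof in place of a literature pointer.
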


An equivalent reformulation of the lemma is that 
\begin{equation}
\label{eq:thanks_johnson}
\inf\limits_{\by\neq 0} \frac{\langle \bM( \bA^{-1}) \by, \by \rangle}{ \langle \bM(\bA)^{-1} \by, \by \rangle } = [1 + \rho(\bM(\bA)^{-1} \bN(\bA) )^2]^{-1}.
\end{equation}
By definition, $\rho(\bM(\bA)^{-1} \bN(\bA))$ is the spectral radius of $\bM(\bA)^{-1} \bN(\bA)$, $\textit{i.e.}$, the norm of the eigenvalue of maximal norm. In this particular case, the eigenvalues of $(\bM(\bA)^{-1} \bN(\bA) )$ are conjugate pairs of {purely} imaginary numbers {$\pm i |t_j|$, as well as $0$ if the order of $\bA$ is odd.

Putting everything together, we get:
\begin{theorem}[Convergence of WHP-GCR for positive definite $\bA$]
\label{th:final}
Assume that the operator $\bA \in \mathbb K^{n\times n}$ is positive definite. Assume also that the preconditioner $\bH \in \mathbb K^{n\times n}$ is hpd and that right preconditioned GCR is applied in the inner product induced by $\bH$.
The $i$-th iterate of Algorithm~\ref{alg:orthosymprecW} satisfies
\begin{equation}
\label{eq:final}
\frac{ \|\br_{i} \|_\bH}{\|\br_{0} \|_\bH} \leq \left[ 1 - \frac{\lambda_{\min} (\bH\bM(\bA))}{\lambda_{\max} (\bH\bM(\bA))} \times  \frac{1}{1 + \rho(\bM(\bA)^{-1} \bN(\bA))^2  } \right]^{i/2}. 
\end{equation}
The same result holds for all truncated and restarted versions of WHP-GCR and WHP-GMRES, including WHP-MR. 
\end{theorem}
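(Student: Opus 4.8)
The plan is to reduce the whole statement to the one-step estimate already in hand and then to evaluate a single infimum under the extra hypothesis. Applying Theorem~\ref{th:RPconv} in the $\bH$-inner product (that is, with $\bW=\bH$), the residuals obey $\|\br_i\|_\bH/\|\br_0\|_\bH\le(1-C)^{i/2}$ where
\[
C=\inf\limits_{\by\neq 0}\frac{|\langle \bA\bH\by,\by\rangle_\bH|^2}{\|\bA\bH\by\|_\bH^2\,\|\by\|_\bH^2},
\]
so it is enough to prove that $C\ge \frac{\lambda_{\min}(\bH\bM(\bA))}{\lambda_{\max}(\bH\bM(\bA))}\times\frac{1}{1+\rho(\bM(\bA)^{-1}\bN(\bA))^2}$. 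I would begin from the chain of substitutions already performed in Theorem~\ref{th:HPconv} (the changes of variable $\by\leftarrow\bH\by$ and $\by\leftarrow\bA\by$, together with $|\langle\bB\by,\by\rangle|\ge|\langle\bM(\bB)\by,\by\rangle|$) and simply continue it, now exploiting that $\bM(\bA)$ is hpd.

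The key manoeuvre is a three-way factorization of $C$. Since $\bM(\bA)$ is positive definite, the property cited from \cite{johnson1972matrices} guarantees that $\bA^{-1}$ is well defined and that $\bM(\bA^{-1})$ is positive definite as well; this lets me insert the strictly positive auxiliary form $\langle\bM(\bA)^{-1}\by,\by\rangle$ and bound
\[
C\ \ge\ \inf\limits_{\by\neq 0}\frac{\langle\bM(\bA^{-1})\by,\by\rangle}{\langle\bM(\bA)^{-1}\by,\by\rangle}\times\inf\limits_{\by\neq 0}\frac{\langle\bM(\bA)^{-1}\by,\by\rangle}{\langle\bH\by,\by\rangle}\times\inf\limits_{\by\neq 0}\frac{\langle\bM(\bA)\by,\by\rangle}{\langle\bH^{-1}\by,\by\rangle},
\]
using that the infimum of a product dominates the product of infima. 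Positivity of every quadratic form in sight is exactly what permits dropping the absolute values here.

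Next I would identify the three factors. The last two are spectral quantities: passing through the generalized eigenproblems $\bH\bM(\bA)\bH\by=\lambda\bH\by$ and $\bM(\bA)\bH\bM(\bA)\by=\lambda\bM(\bA)\by$, both of which carry the spectrum of $\bH\bM(\bA)$, a Rayleigh--Ritz argument (with the substitutions $\by\leftarrow\bH\by$ and $\by\leftarrow\bM(\bA)\by$, respectively) identifies the third factor with $\lambda_{\min}(\bH\bM(\bA))$ and the middle factor with $1/\lambda_{\max}(\bH\bM(\bA))$. The first factor is precisely the ratio evaluated in \eqref{eq:thanks_johnson}, equal to $[1+\rho(\bM(\bA)^{-1}\bN(\bA))^2]^{-1}$. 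Multiplying the three yields the desired lower bound on $C$, and inserting it into $(1-C)^{i/2}$ gives \eqref{eq:final}; the statement for truncated and restarted variants then follows verbatim from Theorem~\ref{th:RPconvtrunc}.

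I expect the only real difficulty to sit in the first factor. The two Rayleigh--Ritz evaluations and the factorization are essentially bookkeeping, their one subtlety being to keep all forms positive so the absolute values can be removed. By contrast, the estimate $\langle\bM(\bA^{-1})\by,\by\rangle/\langle\bM(\bA)^{-1}\by,\by\rangle\ge[1+\rho(\bM(\bA)^{-1}\bN(\bA))^2]^{-1}$ is the genuinely non-Hermitian content of the theorem and is where the nontrivial spectral result of \eqref{eq:thanks_johnson} (the Johnson-type lemma) is indispensable.
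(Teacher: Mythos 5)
Your proposal is correct and follows essentially the same route as the paper: starting from the bound of Theorem~\ref{th:HPconv}, inserting the auxiliary form $\langle \bM(\bA)^{-1}\by,\by\rangle$ to split the estimate into three factors, identifying two of them with $\lambda_{\min}(\bH\bM(\bA))$ and $1/\lambda_{\max}(\bH\bM(\bA))$ by Rayleigh--Ritz on the stated generalized eigenproblems, and evaluating the remaining ratio via \eqref{eq:thanks_johnson}. Your closing remark correctly locates the genuinely non-Hermitian content in that last factor, which is exactly where the paper invokes the Johnson-type lemma.
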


The estimate in the theorem has split the residual bound into two components:
\begin{itemize}
\item the condition number of the Hermitian part of $\bA$ preconditioned by $\bH$, 
\item a measure of the strength of non-Hermitianness of $\bA$ that is independent of $\bH$.
\end{itemize}

\begin{corollary}
\label{cor:final}
Under the assumptions of Theorem~\ref{th:final}, if $\bH$ is a preconditioner for $\bM(\bA)$ such that the condition number of the preconditioned operator is $\kappa(\bH \bM(\bA))$ then 
\begin{equation}
\frac{ \|\br_{i} \|_\bH}{\|\br_{0} \|_\bH} \leq \left[ 1 -  \frac{\kappa(\bH \bM(\bA))^{-1}}{1 + \rho(\bM(\bA)^{-1} \bN(\bA) )^2 } \right]^{i/2}. 
\end{equation}
The same result holds for all truncated and restarted versions of WHP-GCR and WHP-GMRES, including WHP-MR. 
\end{corollary}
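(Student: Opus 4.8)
The plan is to recognize that Corollary~\ref{cor:final} is nothing more than a cosmetic rewriting of Theorem~\ref{th:final}: the only task is to identify the product $\lambda_{\min}(\bH\bM(\bA))/\lambda_{\max}(\bH\bM(\bA))$ appearing in \eqref{eq:final} with the reciprocal condition number $\kappa(\bH\bM(\bA))^{-1}$. So I would simply invoke Theorem~\ref{th:final} and then replace the eigenvalue ratio by the condition number, the bulk of the work being to justify that this replacement is legitimate.

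First I would confirm that the spectrum of $\bH\bM(\bA)$ is real and strictly positive. Under the standing hypotheses, $\bH$ is hpd and, since $\bA$ is positive definite, $\bM(\bA)$ is hpd as well. Writing $\bH^{1/2}$ for the hpd square root of $\bH$, the similarity identity $\bH^{-1/2}(\bH\bM(\bA))\bH^{1/2} = \bH^{1/2}\bM(\bA)\bH^{1/2}$ exhibits $\bH\bM(\bA)$ as similar to the hpd matrix $\bH^{1/2}\bM(\bA)\bH^{1/2}$. Hence $\bH\bM(\bA)$ has the same (real, positive) eigenvalues, and $\lambda_{\min}(\bH\bM(\bA))$ and $\lambda_{\max}(\bH\bM(\bA))$ are genuinely its extreme eigenvalues, consistent with the Rayleigh--Ritz characterizations already recorded in Section~\ref{sec:PD}.

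With the spectrum positive and real, the condition number is $\kappa(\bH\bM(\bA)) = \lambda_{\max}(\bH\bM(\bA))/\lambda_{\min}(\bH\bM(\bA))$, so $\kappa(\bH\bM(\bA))^{-1} = \lambda_{\min}(\bH\bM(\bA))/\lambda_{\max}(\bH\bM(\bA))$. Substituting this identity into the estimate \eqref{eq:final} of Theorem~\ref{th:final} produces exactly the claimed bound, and the assertion for the truncated and restarted variants is inherited directly from Theorem~\ref{th:final}, which already covers them.

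There is essentially no obstacle here; the single point deserving care is the reading of $\kappa(\bH\bM(\bA))$. It must be understood as the ratio of largest to smallest eigenvalue, equivalently the condition number of $\bH\bM(\bA)$ viewed as a self-adjoint operator in the $\bH^{-1}$-inner product (in which it is self-adjoint, being similar to $\bH^{1/2}\bM(\bA)\bH^{1/2}$), rather than the Euclidean condition number of the generically non-normal matrix $\bH\bM(\bA)$. Once this convention is fixed, the corollary follows immediately.
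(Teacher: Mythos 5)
Your proof is correct and matches the paper's (implicit) argument: the corollary is an immediate restatement of Theorem~\ref{th:final} obtained by identifying $\lambda_{\min}(\bH\bM(\bA))/\lambda_{\max}(\bH\bM(\bA))$ with $\kappa(\bH\bM(\bA))^{-1}$, which is legitimate since $\bH\bM(\bA)$ is similar to the hpd matrix $\bH^{1/2}\bM(\bA)\bH^{1/2}$ and hence has real positive spectrum. Your remark on reading $\kappa$ as the eigenvalue ratio (rather than a Euclidean condition number of a non-normal matrix) is a sensible clarification consistent with the Rayleigh--Ritz characterizations the paper uses.
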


In particular, if $\bH$ is a domain decomposition preconditioner such that $\kappa(\bH \bM(\bA))$ can be bounded independently of the number of subdomains, then the convergence bound above does not depend on the number of subdomains either. In other words, if a scalable domain decomposition method is known for the Hermitian part of $\bA$, the proposed algorithm for the non-Hermitian problem is also scalable. For many problems arising from the discretization of PDEs, $\bM(\bA)$ is derived from a differential operator of lower order than that producing $\bN(\bA)$ so $\rho(\bM(\bA)^{-1} \bN(\bA) )$ is bounded independently of the discretization step (an observation already made in a different context in \cite{zbMATH03619254} and \cite{elman1982iterative}[Section 10]). If $\bH$ can also be chosen so that the preconditioned Hermitian part is well conditioned independently of $h$ then the residual bound is $h$-independent.

Next, it is illustrated that WHP-GCR is efficient for mildly non-Hermitian problems. If the preconditioner is well chosen, the algorithm is scalable and optimal (in the sense that convergence does not depend on the discretization step).

\section{Illustration: Convection-Diffusion-Reaction}
\label{sec:Numerical}

In this section, the problem considered is the convection-diffusion-reaction problem posed in $\Omega = [0,1]^2$. It is a real-valued problem ($\mathbb K = \mathbb R$), so Hermitian means symmetric. The presentation, notation and test case are inspired by \cite{zbMATH07395831,bonazzoli:hal-03882577}. The strong formulation of the problem is: 
\begin{align*}
c_0 u + \operatorname{div}(\mathbf a u) - \operatorname{div} (\nu \nabla u) &= f \text{ in } \Omega,\\ 
u &= 0 \text{ on } \partial \Omega.
\end{align*}

The variational formulation is: 
Find $u \in H^1_0(\Omega)$ such that 
\[
\underbrace{\int_\Omega \left(\left(c_0 + \frac{1}{2} \operatorname{div} \mathbf a \right) uv+ \nu \nabla u \cdot \nabla v \right)}_{\text{symmetric part}}  + \int_\Omega \left(\frac{1}{2} \mathbf a \cdot \nabla u v - \frac{1}{2} \mathbf a \cdot \nabla v u \right)  = \int_\Omega fv, 
\]
for all $ v \in H^1_0(\Omega)$. 

The right hand side and the convection field are chosen as 
\[
f(x,y) =  \operatorname{exp} (-10((x - 0.5)^2 + (y - 0.1)^2)) \text{ and } \mathbf a(x,y) = 2\pi[-(y-0.1), x-0.5]. 
\]
The reaction coefficient $c_0 >0$ and viscosity $\nu >0 $ are chosen to be constant over $\Omega$. {Together with $ \operatorname{div} \mathbf a =0$, positivity of $c_0$ and $\nu$ ensures that $\bA$ is positive-definite.} Varying $c_0$ and $\nu$ {inside $\Omega$} would not cause any additional difficulty since the proposed preconditioner handles heterogeneous $c_0$ and $\nu$ (in the Hermitian part). The problem is discretized by Lagrange $\mathbb P_1$ finite elements {on a regular triangular mesh of characteristic length $h$}. The WHP-GCR algorithm is implemented in FreeFem++ \cite{MR3043640} with the ffddm library \cite{FFD:Tournier:2019}.  Except in one case (where it is specified otherwise), all iteration counts for WHP-GCR correspond to the number of iterations needed to reach $\|\br_i\|_\bH < 10^{-6} \|\bb \|_\bH$ starting from a zero initial guess. {The Dirichlet boundary condition has been enforced by penalization. The solution computed by a direct solve on a $501\times 501$ degree of freedom (dof) mesh has been plotted in Figure~\ref{fig:16sub} (left) for different values of $c_0 = \nu = 0.1$.} 
\begin{figure}
\begin{center}
\includegraphics[width=0.18 \textwidth]{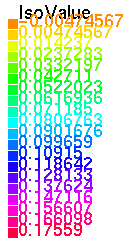}
\includegraphics[width=0.35 \textwidth]{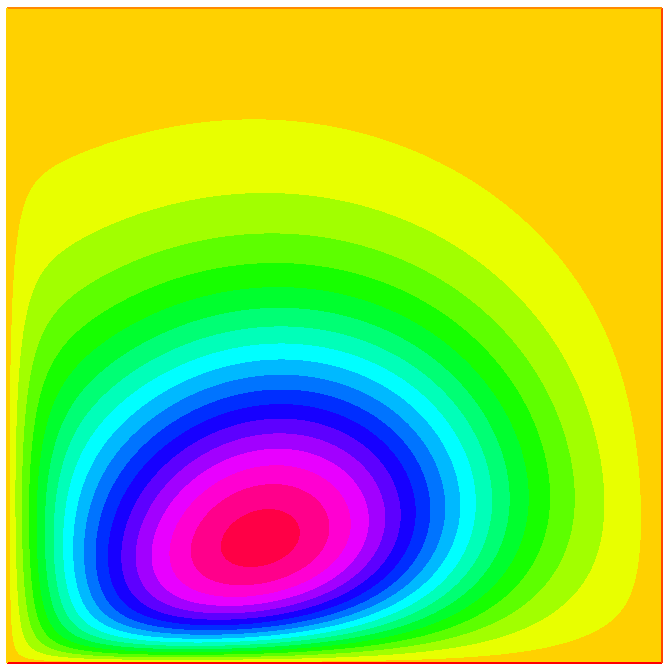}
\includegraphics[width = 0.35 \textwidth]{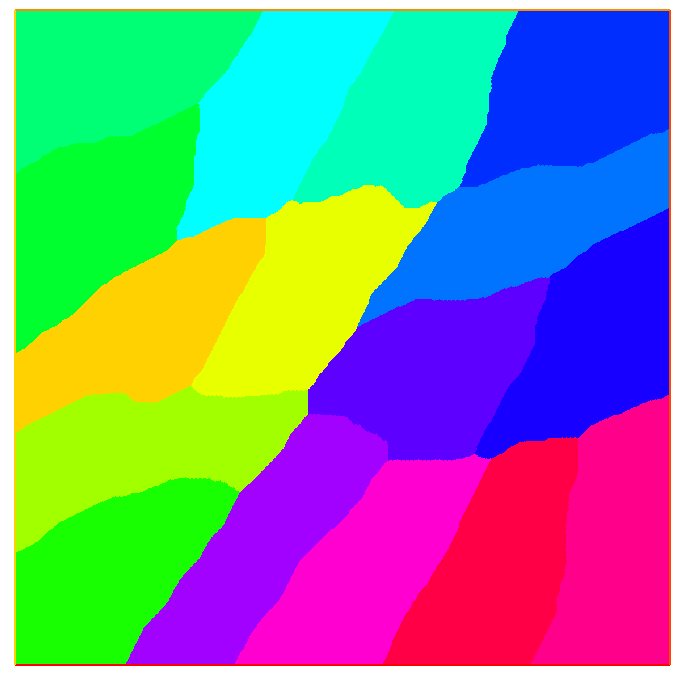}
\end{center}
\caption{Left: Solution for $c_0 = \nu = 0.1$. Right: Partition into $N=16$ subdomains computed by Metis.}
\label{fig:16sub}
\end{figure}

Let $\bM(\bA)$ be the matrix corresponding to the discretization of the symmetric part of the problem. $\bM(\bA)$ is preconditioned by the Additive Schwarz domain decomposition method with the GenEO coarse space \cite{2011SpillaneCR,spillane2013abstract}. The partition of $\Omega$ into $N$ subdomains $\Omega\s$ is computed automatically by Metis. One layer of overlap is added to each $\Omega\s$. Letting ${\bR\s}^\top$ ($s=1,\dots,N$) denote the prolongation by zero of local finite element functions (in $\Omega\s$) to the whole of $\Omega$, the preconditioner can be written as: 
\[
\bH = \bPi \sum\limits_{s=1}^N {\bR\s}^\top \underbrace{(\bR\s \bM(\bA) {\bR\s}^\top)^{-1}}_{\text{local solves}} \bR\s \bPi^\top + {\bR\0}^\top \underbrace{(\bR\0 \bM(\bA) {\bR\0}^\top)^{-1}}_{\text{coarse solve}} \bR\0, 
\]
where $\bPi = \matid -  {\bR\0}^\top (\bR\0 \bM(\bA) {\bR\0}^\top)^{-1} \bR\0 \bM(\bA)$ is the coarse projector (also known as a deflation operator) and the vectors in $\bR_0^\top$ span the coarse space (or deflation space). The particularity of GenEO is that the coarse vectors are constructed by solving the low frequency eigenmodes for a generalized eigenvalue problem in each subdomain. The user provides a threshold $\tau > 0$, \textit{e.g.}, $\tau = 0.15$. The corresponding ffddm options are 

\verb+-ffddm_schwarz_method asm+ 

\verb+-ffddm_geneo_threshold 0.15+ 

\verb+-ffddm_schwarz_coarse_correction BNN+. 

{
The condition number of the resulting preconditioned operator is bounded by
\[
\kappa(\bH \bM(\bA)) \leq k_0 \left(1 + \frac{k_0}{\tau}\right), 
\] 
where $k_0$ denotes the maximal number of subdomains that each mesh element belongs to \cite{spill2014}[Theorem 4.40]. This constant does not depend on the total number $N$ of subdomains. In all examples $\tau$ has been set to $0.15$. As an illustration, the partition into $N = 16$ subdomains provided by Metis is shown in Figure~\ref{fig:16sub} (right). For this case, it holds that $k_0 = 3$. Consequently, the condition number of the preconditioned symmetric part of the problem is bounded from above by $\kappa(\bH \bM(\bA)) \leq 3(1+3 / 0.15) = 63$. 
Injecting this into the bound from Theorem~\ref{th:final}, it is obtained that the residuals produced by WHP-GCR or WHP-GMRES satisfy 
\[
\frac{ \|\br_{i} \|_\bH}{\|\br_{0} \|_\bH} \leq \left[ 1 -  \frac{0.016}{1 + \rho(\bM(\bA)^{-1} \bN(\bA))^2  } \right]^{i/2}. 
\]
If, for example $\rho(\bM(\bA)^{-1} \bN(\bA)) \leq 1$, the bound gives 
\[
\frac{ \|\br_{i} \|_\bH}{\|\br_{0} \|_\bH} \leq {\sqrt{\left[ 1 -  \frac{1}{126} \right]}}\,^{i} = 0.996^i; \, \frac{ \|\br_{500} \|_\bH}{\|\br_{0} \|_\bH} \leq 0.14 \text{ and } \frac{ \|\br_{3468} \|_\bH}{\|\br_{0} \|_\bH} < 1.0\cdot10^{-6} .  
\]
The residual is guaranteed to decrease only by $0.4 \%$ at each iteration. As was previously explained, the bound is pessimistic for fully orthogonalized WP-GCR and WP-GMRES and we expect in practice to observe much faster decrease in residual.
}

\paragraph{Value of $\rho(\bM(\bA)^{-1} \bN(\bA))$} It remains to consider the value of $\rho(\bM(\bA)^{-1} \bN(\bA))$. In the proof of \cite{zbMATH07395831}[Lemma 4.5], by Cauchy Schwarz and some arithmetic identities, it is proved that
\[
\langle \by, \bN(\bA) \bx \rangle \leq \alpha \|\bx \|_{\bM(\bA)} \| \by\|_{\bM(\bA)}, \quad \forall \bx, \by \in \mathbb R^n.
\]
with $\alpha = \frac{1}{2} \frac{\|\mathbf{a}\|_{L^\infty(\Omega)}}{\sqrt{\inf (\nu) \inf (c_0 + \frac{1}{2} \operatorname{div} (\mathbf a))}}$.  Consequently, for $\bx \neq \mathbf 0$, it holds that 
\begin{align*}
\| \bM(\bA)^{-1} \bN(\bA)\bx \|_{\bM(\bA)}^2 &\leq \alpha \langle \bx, \bM(\bA) \bx \rangle^{1/2} \langle \bN(\bA) \bx, \bM(\bA)^{-1} \bN(\bA) \bx \rangle^{1/2}\\ 
& = \alpha \| \bx\|_{\bM(\bA)} \| \bM(\bA)^{-1} \bN(\bA) \bx\|_{\bM(\bA)} . 
\end{align*} 
The spectral radius of a matrix is bounded from above by any of its natural norms, and in particular by the norm induced by $\bM(\bA)$, from which it immediately follows that 
\begin{equation}
\label{eq:boundrho}
\rho(\bM(\bA)^{-1} \bN(\bA)) \leq \|\bM(\bA)^{-1} \bN(\bA) \|_{\bM(\bA)} \leq \frac{1}{2} \frac{\|\mathbf{a}\|_{L^\infty(\Omega)}}{\sqrt{\inf (\nu) \inf (c_0 + \frac{1}{2} \operatorname{div} (\mathbf a))}}. 
\end{equation}
Bound \eqref{eq:boundrho} for $\rho(\bM(\bA)^{-1} \bN(\bA))$ does not depend on the discretization step $h$ so neither does the overall convergence bound. 

In Table~\ref{tab:rho}, the actual value of $\rho(\bM(\bA)^{-1} \bN(\bA))$ computed by Octave's \textit{eigs} is given in the case where $c_0 = \nu = 1$. The discretization step varies between $h = 1/10$ and $h=1/200$ and $\rho(\bM(\bA)^{-1} \bN(\bA))$ varies only between $0.31$ and $0.34$. 
In comparison, bound \eqref{eq:boundrho} gives $\rho(\bM(\bA)^{-1} \bN(\bA)) \leq 3.23$. The bound is approximately $10$ times larger than the computed value of the spectral radius.  

\begin{table}
\centering
\begin{tabular}{|c|c|c|c|c|}
\hline
Discretization step $h$ & {$1/500$} & $1/200$ & $1/30$ & $1/10$ \\
\hline
$\rho(\bM(\bA)^{-1} \bN(\bA))$ & 0.3391 &  0.3389 & 0.3380 & 0.3136 \\
\hline
\end{tabular}
\caption{Computation of $\rho(\bM(\bA)^{-1} \bN(\bA))$ when the discretization step $h$ varies. Case $\nu = c_0 = 1$. } 
\label{tab:rho}
\end{table}

\paragraph{Scalability (Table~\ref{tab:scal})}
Since Additive Schwarz with the GenEO coarse space is scalable, it has been proved that the overall algorithm is scalable. This is checked by solving the same problem for different partitions into subdomains (all computed by Metis). Two discretizations are considered: $h = 1/200$ and $h = 1/500$. For this test $\nu = c_0 = 1$. It is observed that the method is indeed scalable: the iteration counts reported in Table~\ref{tab:scal} do not depend on the number of subdomains. A dependency on $h$ is observed and this is studied next.

\begin{table}
\centering
\begin{tabular}{|c|c|c|c|c|}
\hline
Number of subdomains & 4 & 8 & 16 & 32 \\
\hline
Iteration count for $h= 1/200$ & 19 & 20 & 20 & 20\\ 
\hline
Iteration count for $h= 1/500$ & 18 & 18 & 19 & 20 \\ 
\hline
\end{tabular}
\caption{Scalability. In each line, the same problem is solved for an increasing number of subdomains. The iteration count remains constant. Case $\nu = c_0 = 1$.}
\label{tab:scal}
\end{table}

{
\paragraph{Dependency on $h$ (Table~\ref{tab:h})}
The influence of the discretization step $h$ on the iteration count is studied in Table~\ref{tab:h}. The partition is set to $N = 8$ subdomains. For, three different values of $c_0 = \nu $, the mesh size $h$ varies between $1/100$ and $1/2000$. It is observed that, as predicted by the theory, the number of iterations remains almost constant when $h$ varies.
}

\begin{table}
\centering
\begin{tabular}{|c|c|c|c|c|c|c|}
\hline
&$1/h$ & 2000 & 1000 & 500 & 200 & 100   \\
\hline
&$\#$ dofs ($n$) &4 004 001 & 1 002 001 & 251 001 &  40 401 & 10 201 \\ 
\hline
$c_0 = \nu = 10$ & it. count & 16 & 16 & 17 & 17 & 20 \\
$c_0 = \nu = 1$  &it. count  & 17 & 18 & 19 & 20 & 21 \\ 
$c_0 = \nu = 0.1 $&it. count & 39 &40  &42  &43  &41  \\ 
\hline
\end{tabular}
\caption{Dependency on mesh size $h$. Each line corresponds to a value of $(\nu, c_0)$.}
\label{tab:h}
\end{table}

\paragraph{Dependency on strength of non-symmetry (Table~\ref{tab:nonsym})}
For this test, $h = 1/500$ and $N = 8$. The value of $\nu$ and $c_0$ are varied and the corresponding iteration counts are reported in Table~\ref{tab:nonsym}. The problem converges very fast when the symmetric part dominates and not so fast otherwise. This is expected by design of the algorithm.   

{
\begin{table}
\centering
\begin{tabular}{|c|c|c|c|c|c|c|c|}
\hline
$c_0 = \nu$ & 0.001 & 0.01 & 0.1 &  1 &  10 & \text{only symmetric part} \\
\hline
Iteration count & $>$ 500 ( $1.1 \cdot 10^{-4}$) & 161 & 42 & 19 & 17 & 17 \\
\hline
\end{tabular}
\caption{Influence of the relative importance of the symmetric term and the skew-symmetric term. Case $N = 8$ subdomains and $h = 1/500$. The value in parenthesis corresponds to the relative residual $\| \br_{500}\|_\bH / \| \rhs\|_\bH$ when the algorithm stopped after $500$ iterations.} 
\label{tab:nonsym}
\end{table}
}

\paragraph{GMRES in the Euclidean inner product (Table~\ref{tab:norm})} It has already been observed that changing the inner product in GMRES does not influence its convergence very much \cite{zbMATH05626642,zbMATH07726053}. A scalability test is run for $h = 1/500$ in two settings $\nu = c_0 = 1$ and   $\nu = c_0 = 10$. The number of subdomains varies between $4$ and $32$. Two algorithms are applied with the same preconditioner as previously: right preconditioned GMRES and WHP-GCR. The difference is that GMRES works in the Euclidean inner product while WHP-GCR (which produces the same iterates as WHP-GMRES) works in the $\bH$-inner product. For both, the stopping criterion has been set to $\|\br_i\| < 10^{-6} \| \bb \| $ where the norm is the Euclidean norm. It is remarkable that the iteration counts reported in Table~\ref{tab:norm} are almost identical. This means that, in practice, using the hpd preconditioner and the Euclidean norm will most likely give results that are in agreement with the developed theory. The advantage is to save the effort of implementing WHP-GCR if GMRES is already available. The extra cost of running WHP-GCR is small compared to GCR (or GMRES) since it is only the cost of storing one (or two) extra vectors per iteration. 

\begin{table}
\centering
{\bf Case $\nu = c_0 = 1$}

\begin{tabular}{|c|c|c|c|c|}
\hline
Number of Subdomains & 4 & 8 & 16 & 32 \\
\hline
GMRES & 24 & 25 & 26 & 26 \\ 
\hline
WHP-GCR (Euclidean stopping criterion) & 25 & 26 & 26 & 27 \\ 
\hline
\end{tabular}

\vspace{0.5cm}
{\bf Case $\nu = c_0 = 0.1$}

\begin{tabular}{|c|c|c|c|c|}
\hline
Number of Subdomains & 4 & 8 & 16 & 32 \\
\hline
GMRES & 52 & 52 & 53 & 52 \\ 
\hline
WHP-GCR (Euclidean stopping criterion) & 53 & 53 & 55 & 53 \\
\hline
\end{tabular}
\caption{Influence of the inner product: GMRES in Euclidean norm compared to WHP-GCR. The stopping criterion is in Euclidean norm for both algorithms. Case $h = 1/500$. The number of subdomains $N$ varies and two cases are considered : $\nu = c_0 = 1$ and $\nu = c_0 = 0.1$.} 
\label{tab:norm}
\end{table}

{
\paragraph{Comparison with a non-symmetric preconditioner} We finally compare symmetric and non-symmetric preconditioning. Since WHP-GCR cannot be applied with a non-symmetric preconditioner, the GMRES solver is used for this comparison. The stopping criterion is again set to $\|\br_i\| < 10^{-6} \| \bb \| $. As a non-symmetric preconditioner the one-level additive Schwarz method has been selected. To make the comparison fair, we also precondition by the symmetric one-level additive Schwarz preconditioner corresponding to $\bM(\bA)$. Finally, we include the two-level symmetric preconditioner which has been applied in all previous tests. The results are presented in Table~\ref{tab:symnonsym}. As soon as the problem becomes significantly non-symmetric ($c_0= \nu \geq 0.1 $) there is a clear advantage for the non-symmetric preconditioner compared to the one-level symmetric preconditioner. For $c_0= \nu = 0.01$, the one-level non-symmetric preconditioner converges much faster even than even the two-level symmetric preconditioner. This is not very surprising since the symmetric preconditioner does not account at all for the convective term. It is to be noted however that this non-symmetric preconditioner will deteriorate when $N$ increases and when $h$ decreases (as briefly illustrated in Figure~\ref{tab:badnonsym}). 
}

\begin{table}
\centering
\begin{tabular}{|c|c|c|c|c|c|c|}
\hline
$c_0 = \nu$ &  0.01 & 0.1 &  1 &  10 & \text{only symmetric part} \\
\hline
Non-sym one-level & 35 & 68 & 81 & 81 & $\times$\\ 
Sym one-level & $>200$ & 105 & 87 & 84 & 81 \\ 
Sym two-level & 191 & 52 & 25 & 23 & 24 \\ 
\hline
\end{tabular}
\caption{GMRES Iteration counts. One-level non-symmetric preconditioner is compared to one-level symmetric preconditioner and two-level symmetric preconditioner.  Case $N = 8$ subdomains and $h = 1/500$.}
\label{tab:symnonsym}
\end{table}

\begin{table}
\centering
\begin{tabular}{|c|c|c|c|c|}
\hline
\multicolumn{5}{|c|}{Non-symmetric preconditioning}\\
\hline
&\multicolumn{2}{|c|}{$8$ subdomains} &\multicolumn{2}{|c|}{$16$ subdomains}\\ 
\hline
 & $h=1/500$ & $h = 1/1000$  & $h=1/500$ & $h = 1/1000$ \\
\hline
$c_0 = \nu=  0.01$  & 35 & 58 & 47 & 67 \\ 
$c_0 = \nu=  0.1$  & 68 & 96 & 82 & 113 \\ 
\hline
\end{tabular}
\caption{GMRES Iteration counts with the non-symmetric one-level Additive Schwarz preconditioner. Convergence deteriorates when number of subdomains increases or mesh size $h$ decreases.}
\label{tab:badnonsym}
\end{table}

\section{Conclusion}
In this article, the convergence of GMRES, GCR and their truncated and restarted versions has been studied. The influence of the preconditioner and the inner product have been made explicit. It has been proposed, even for non-Hermitian problems to apply a hpd preconditioner $\bH$. Then, GMRES or GCR can be applied in the $\bH$ inner product. This is referred to as WHP-GMRES. A new convergence result is proved for cases where $\bA$ is positive definite. The two terms in the convergence bound are the condition number of the Hermitian part of $\bA$ once preconditioned by $\bH$ and the spectral radius of $\bM(\bA)^{-1}\bN(\bA)$. This last term can be seen as a measure of the strength of non-Hermitianness. It does not depend on the choice of the preconditioner. A particular application is the case where $\bH$ is a domain decomposition preconditioner. If the preconditioner applied to the Hermitian part of $\bA$ leads to a scalable method, then WHP-GCR will be scalable too. {For the Convection-Diffusion-Reaction problem, it has also been proved that convergence will not depend on the mesh size $h$ as long as the condition number of the preconditioned symmetric part does not depend on $h$.} Numerical results have confirmed these findings. It remains to improve the algorithm in cases where the problem is strongly non-Hermitian or indefinite.  
\appendix

\section{Implementation of WHP-GCR}
Algorithms~\ref{alg:altorthosymprecW} and ~\ref{alg:altbisorthosymprecW} propose two alternate implementations of WHP-GCR (Algorithm~\ref{alg:orthosymprecW}) that require no more storage that the usual GCR algorithm.

The notation $\tilde\cdot$ has been used to emphasize that vectors with a tilde do not get orthogonalized and saved. In exact arithmetic all three versions produce the same iterates. The Euclidean residual $\br_i$ is not updated in Algorithm~\ref{alg:altorthosymprecW} which may be a drawback. In finite precision, computing $\alpha_i$ from vectors that have not been explicitly orthogonalized could lead to inaccuracy. This is why Algorithm~\ref{alg:orthosymprecW} is emphasized and implemented in the numerical result section. 

\begin{minipage}{0.45\textwidth}
\begin{algorithm}[H]
\caption{Alternate WHP-GCR}
\label{alg:altorthosymprecW}
\begin{algorithmic}
\REQUIRE $\bx_0 \in \mathbb R^n$
\STATE $\bz_0 = \bH (\bb - \bA \bx_{0})$  
\STATE $\bp_0 = \bz_0$
\STATE $\tilde \bq_0 = \bA \bp_0$
\STATE $\by_0 = \bH \tilde \bq_{0}$
\FOR{$i = 0,\,1,\, \dots,\;$convergence}
 \STATE $\delta_i = \langle \by_i, \tilde \bq_{i}\rangle $; \quad $\gamma_i = \langle {\tilde \bq_{i}}, \bz_{i} \rangle$
 \STATE  $\alpha_i =  \gamma_i/\delta_i $
 \STATE $\bx_{i+1} = \bx_{i}+ \alpha_i \bp_i$ 
 \STATE 
 \STATE $\bz_{i+1} = \bz_i- \alpha_i \by_i$ 
 \STATE $\bp_{i+1} = \bz_{i+1}$
 \STATE $\tilde \bq_{i+1} = \bA \bz_{i+1}$ 
 \STATE $\by_{i+1} = \bH \tilde \bq_{i+1}$ 
  \FOR{$j = 0,\,\dots,\, i$}
\STATE  $\Phi_{i,j} = \langle \by_j,  \tilde \bq_{i+1} \rangle$
\STATE  $\beta_{i,j} = \Phi_{i,j}/ \delta_j^{-1} $
\ENDFOR
\STATE  $\bp_{i+1} -= \sum\limits_{j=0}^{i}\beta_{i,j} \bp_j  $
\STATE  $\by_{i+1} -= \sum\limits_{j=0}^{i}\beta_{i,j} \by_j  $
\ENDFOR  
\RETURN{$\bx_{i+1}$}
\end{algorithmic}
\end{algorithm}
\end{minipage}
\begin{minipage}{0.45\textwidth}
\begin{algorithm}[H]
\caption{Alternate WHP-GCR}
\label{alg:altbisorthosymprecW}
\begin{algorithmic}
\REQUIRE $\bx_0 \in \mathbb R^n$
\STATE $\bz_0 = \bH (\bb - \bA \bx_{0})$  
\STATE $\bp_0 = \bz_0$
\STATE $\bq_0 = \bA \bp_0$
\STATE $\tilde \by_0 = \bH \bq_{0}$
\FOR{$i = 0,\,1,\, \dots,\;$convergence}
\STATE  $\delta_i = \langle\tilde  \by_i, \bq_{i}\rangle $; \quad $\gamma_i = \langle { \bq_{i}}, \bz_{i} \rangle$
\STATE $\alpha_i =  \gamma_i/\delta_i $
\STATE  $\bx_{i+1} = \bx_{i}+ \alpha_i \bp_i$ 
\STATE  $\br_{i+1} = \br_{i}- \alpha_i \bq_i$ 
\STATE  $\bz_{i+1} = \bz_i- \alpha_i \by_i$ 
\STATE  $\bp_{i+1} = \bz_{i+1}$
\STATE  $\bq_{i+1} = \bA \bz_{i+1}$ 
\STATE  $\tilde \by_{i+1} = \bH \bq_{i+1}$ 
  \FOR{$j = 0,\,\dots,\, i$}
\STATE  $\Phi_{i,j} = \langle \tilde \by_j,  \bq_{i+1} \rangle$ 
\STATE $\beta_{i,j} = \Phi_{i,j}/ \delta_j^{-1} $
  \ENDFOR
\STATE  $\bp_{i+1} -= \sum\limits_{j=0}^{i}\beta_{i,j} \bp_j  $
\STATE  $\bq_{i+1} -= \sum\limits_{j=0}^{i}\beta_{i,j} \bq_j  $
\ENDFOR
\RETURN{$\bx_{i+1}$}
\end{algorithmic}
\end{algorithm}
\end{minipage}

\newpage 
\bibliographystyle{siamplain}
\bibliography{NonSymOrthodir}

\end{document}